\documentclass[letterpaper, 10 pt, conference]{ieeeconf}
\IEEEoverridecommandlockouts

\usepackage{amssymb,amsmath}

\usepackage{amsthm}
\usepackage{mathtools}
\usepackage{multirow}
\usepackage{booktabs}
\usepackage{graphicx}
\usepackage{comment}
\usepackage[sort,compress,noadjust]{cite}
\usepackage[font=footnotesize]{subcaption}
\usepackage[font=footnotesize]{caption}

\usepackage{amsfonts}
 \usepackage{tabularx}
\usepackage{graphicx}
\usepackage{array}
\usepackage{float}
\usepackage{hhline}
\usepackage{algorithmic}
\usepackage[linesnumbered, ruled, vlined]{algorithm2e}
\usepackage{setspace}
\usepackage{colortbl}
\usepackage{arydshln}
\SetCommentSty{mycommfont}

\DeclareMathOperator*{\argmin}{arg\,min}

\usepackage{accents}
\newcommand{\ubar}[1]{\underaccent{\bar}{#1}}

\usepackage{placeins}

\theoremstyle{plain}
\newtheorem{theorem}{Theorem}
\theoremstyle{plain}
\newtheorem{proposition}{Proposition}
\theoremstyle{plain}

\theoremstyle{plain}
\newtheorem{corollary}{Corollary}
\theoremstyle{definition}

\newtheorem{remark}{Remark}
\newtheorem{example}{Example}

\newtheoremstyle{definition}{}{}{}{}{\bfseries}{.}{.5em}{\thmname{#1}\thmnumber{ #2}\thmnote{ (#3)}}
\theoremstyle{definition}
\newtheorem{definition}{Definition}

\newcommand{\real}{\mathbb{R}}

\newcommand{\map}[3]{#1:#2 \rightarrow #3}

\newcommand{\setdefb}[2]{\big\{#1 \; | \; #2\big\}}

\renewcommand{\bf}{\mathbf{f}}
\newcommand{\bg}{\mathbf{g}}

\newcommand{\bk}{\mathbf{k}}

\newcommand{\bn}{\mathbf{n}}

\newcommand{\bu}{\mathbf{u}}
\newcommand{\bv}{\mathbf{v}}
\newcommand{\bw}{\mathbf{w}}
\newcommand{\bx}{\mathbf{x}}
\newcommand{\by}{\mathbf{y}}

\newcommand{\bA}{\mathbf{A}}
\newcommand{\bB}{\mathbf{B}}

\newcommand{\bG}{\mathbf{G}}
\newcommand{\bH}{\mathbf{H}}
\newcommand{\bI}{\mathbf{I}}
\newcommand{\bJ}{\mathbf{J}}

\newcommand{\bL}{\mathbf{L}}

\newcommand{\bV}{\mathbf{V}}

\newcommand{\bZ}{\mathbf{Z}}

\newcommand{\bLambda}{\boldsymbol{\Lambda}}
\newcommand{\balpha}{\boldsymbol{\alpha}}

\newcommand{\bzero}{\mathbf{0}}

\newcommand{\bOmega}{\boldsymbol{\Omega}}

\newcommand{\Bc}{\mathcal{B}}
\newcommand{\Cc}{\mathcal{C}}

\newcommand{\Kc}{\mathcal{K}}

\newcommand{\Sc}{\mathcal{S}}

\DeclareMathOperator{\sym}{sym}

\DeclareMathOperator{\range}{range}
\DeclareMathOperator{\tr}{tr}

\usepackage[colorlinks,urlcolor=blue]{hyperref}
\usepackage[capitalize]{cleveref}
\crefformat{equation}{(#2#1#3)}
\Crefformat{equation}{Equation~(#2#1#3)}
\Crefname{equation}{Equation}{Eqs.}

\title{\LARGE \textbf{Optimal Decay Matrix Control Barrier Functions:\\Soundness and Completeness for Matrix Safety Constraints }}

\author{ Samuel G. Gessow$^{1}$, Albert Wu$^{2}$, Aaron D. Ames$^2$, and Brett T. Lopez$^{1}$
\thanks{$^1$Authors are with VECTR Laboratory, University of California, Los Angeles, Los Angeles, CA, USA, {\tt\footnotesize \{sgessow, btlopez\}@ucla.edu}}
\thanks{$^2$Authors are with AMBER Lab, California Institute of Technology, Pasadena, CA 91125, USA, {\tt\footnotesize \{albertwu, ames\}@caltech.edu}}
}

\begin{document}
\maketitle
\thispagestyle{empty}
\begin{abstract}
Many safety requirements, such as keeping a manipulator away from kinematic singularities, a network connected, or a system observable, amount to keeping a state-dependent matrix positive definite.
Matrix control barrier functions (MCBFs) enforce such requirements with a semidefinite program that minimally modifies a nominal control input.
The MCBF constraint bounds how fast the safety margin, the smallest eigenvalue of the matrix, may decay.
Yet a fixed decay rule is conservative, rejecting safe controls that drive the state quickly toward, but never across, the boundary of the safe set.
Scalar optimal decay (SOD) addresses this issue by scaling the decay term by a scalar weight that is optimized together with the control.
For scalar barrier functions, SOD then rejects a control at the boundary only if the control would decrease the margin.
For MCBFs, however, SOD can reject controls that leave the margin unchanged, and near the safety boundary SOD accepts such controls only at a high cost.
We introduce matrix optimal decay (MOD), which replaces the scalar weight with a matrix multiplier while keeping the semidefinite program convex.
MOD is provably sound and complete, certifying at the boundary exactly the controls that do not decrease the margin.
If some control can strictly increase the margin at every boundary point, the MOD-MCBF filtered controller is continuous and renders the safe set forward invariant.
We also add an optional recovery condition that drives the system toward the safe set after a violation without affecting soundness or completeness.
On a planar two-link arm avoiding singularities, MOD-MCBF certifies safe motions that both a fixed decay rule and SOD reject, and tracks an end-effector target more closely than SOD.
\end{abstract}

\section{Introduction}\label{sec:introduction}
\suppressfloats[t]
\begin{figure}[t]
    \centering
    \begin{subfigure}[t]{0.49\columnwidth}
        \centering
        \includegraphics[width=\linewidth]{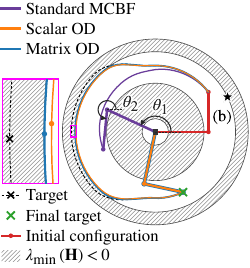}
        \caption{End-effector paths}
        \label{fig:hero_closed_loop}
    \end{subfigure}\hfill
    \begin{subfigure}[t]{0.49\columnwidth}
        \centering
        \includegraphics[width=\linewidth]{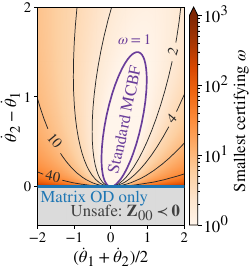}
        \caption{Certified joint velocities at the outer boundary.}
        \label{fig:hero_rates}
    \end{subfigure}
    \caption{Benefits of matrix optimal decay (MOD) on a planar two-link robot that must stay away from kinematic singularities while tracking an end-effector position target.
    (a) The standard MCBF, with a fixed decay function, rejects safe motions that exceed certain joint velocities, while scalar optimal decay (SOD) rejects some safe motions and degrades tracking.
    (b)~Joint velocities (rad/s) at the outer boundary configuration marked by the star in (a). The grey region violates the tangent cone condition~\eqref{eq:tangentcone}. The safe region is colored by the smallest scalar decay weight~\eqref{eq:arm_omega_boundary} that certifies each velocity, and the standard MCBF certifies only the velocities inside the purple curve. Rotating both links together about joint 1 (blue line) preserves the constraint and is certified only by MOD; toward this line, the scalar decay weight required at any nonzero rotation rate grows without bound.
}
    \label{fig:hero}
\end{figure}

Matrix control barrier functions (MCBFs)~\cite{ong2026matrix} provide an effective framework for enforcing safety constraints that arise as positive definiteness of state-dependent matrices, such as information matrices~\cite{gessow2024information}, graph Laplacians~\cite{ong2023nonsmooth,ong2026matrix}, and Jacobian singular values~\cite{forghani2026task}. 
In a typical control barrier function (CBF) safety filtering setup, an optimization problem is solved to find the control input closest to a nominal input, subject to the CBF safety constraint. 
Scalar CBFs (SCBFs) impose a linear constraint that requires a state-dependent safety margin to decrease no faster than a decay function evaluated at the current margin.
MCBFs impose the analogous limit on a state-dependent matrix, whose safety margin is its smallest eigenvalue, through a single matrix semidefinite constraint.
Enforcing the matrix inequality directly avoids differentiating eigenvalues, which can be nonsmooth where they coincide~\cite{glotfelter2017nonsmooth,gessow2024information}.

A critical decision in synthesizing CBF constraints is choosing the decay function, which determines how quickly the safety margin may decrease inside the safe set.
A poor choice can make the constraint needlessly conservative, or even infeasible, at points inside the safe set~\cite{ong2025properties}.
Scalar optimal decay (SOD)~\cite{zeng2021safety,gurriet2020scalable,ong2025properties} avoids committing to a single fixed choice of decay function by scaling it with a nonnegative decay weight that is optimized together with the control input.
Wherever the safety margin is positive, a large enough weight admits any control.
For SCBFs, the margin is zero on the boundary, so for every weight the constraint reduces to requiring that the margin not decrease, which is necessary for safety~\cite{nagumo1942lage,blanchini2008set}.
SOD therefore rejects a control only if it violates this condition at the boundary.

For MCBFs, different eigenvalues can change at different rates, while a fixed decay function constrains the rate of each one.
This can make the safety filter infeasible even when a safety-preserving control exists.
Optimal decay relaxes the bounds on eigenvalues away from zero, which can restore feasibility and guarantee safety~\cite{gessow2026high}.
Yet SOD can still reject controls that preserve every eigenvalue at the boundary.
For example, a control that rotates the matrix without changing its eigenvalues leaves the safety margin unchanged, yet SOD can reject it.
SOD is also conservative inside the safe set: unlike for SCBFs, even controls that leave the safety margin unchanged in MCBFs require a larger scalar decay weight to certify as the state approaches the boundary, resulting in sacrificing nominal control tracking performance.

In this work, we present matrix optimal decay (MOD) for MCBFs, a generalization of SOD to the matrix case. 
We prove that MOD-MCBF is \emph{sound}, certifying at the boundary only controls that do not decrease the safety margin, and \emph{complete}, certifying every such control.
A recovery condition can also be imposed that drives the system toward the safe set if safety is violated, without affecting the boundary guarantees.
We demonstrate MOD-MCBF through singularity avoidance for a planar two-link arm (\cref{fig:hero}), showing that MOD certifies safe motions that are rejected by existing formulations and provides better tracking performance.

\section{Preliminaries}
\subsection{Notation}
The set of real numbers is denoted $\real$, with $\real^n$ denoting the $n$-dimensional Euclidean space, and $\real_{\geq0}$ the set of nonnegative scalars.
The set of real symmetric $p\times p$ matrices is denoted $\mathbb{S}^p$, and the cone of positive definite symmetric matrices is denoted $\mathbb{S}^p_+$. For $\bA,\bB\in\mathbb{S}^p$ we write $\bA\succeq\bB$ ($\bA\succ\bB$) when $\bA-\bB$ is positive semidefinite (definite).
We say a locally Lipschitz function $\alpha:\real\to\real$ is an extended class-$\mathcal{K}$ function $(\alpha \in \mathcal{K}^e)$ if it is strictly monotonically increasing and satisfies $\alpha(0)=0$.
Given a square matrix $\bA \in \real^{n \times n}$, the set of its eigenvalues is denoted $\lambda(\bA)$, the smallest and largest eigenvalues are $\lambda_{\mathrm{min}}(\bA)$ and $\lambda_{\mathrm{max}}(\bA)$, and the kernel and range of $\bA$ are $\ker\bA$ and $\range\bA$.
The symmetric part of a square matrix is $\sym(\bA)\triangleq\tfrac{1}{2}(\bA+\bA^\top)$, and $\|\bA\|_F$ denotes the Frobenius norm.
The Lie derivative of a scalar function $\bx \mapsto h(\bx)$ along a vector field $\bx \mapsto \bf(\bx)$ is denoted $L_\bf h(\bx)= \nabla h( \bx)^\top \bf(\bx)$. 
Similarly, the Lie derivative of a matrix-valued function $\bx \mapsto \bH(\bx)\in\mathbb{S}^p$ along a vector field $\bx \mapsto \bf(\bx)$ is denoted $\bL_\bf \bH(\bx)$, and it is equivalent to a matrix of element-wise Lie derivatives, i.e., $[\bL_\bf \bH(\bx)]_{ij}=L_\bf H_{ij}(\bx)$, $i,j\in\{1,\dots,p\}$.

\subsection{Matrix Control Barrier Functions}
CBFs provide a framework for enforcing state constraints in nonlinear systems through feedback control. 
Consider the control-affine system:
\begin{equation}\label{sys:ctrl_affine}
    \dot \bx = \bf(\bx) + \bg(\bx) \bu,
\end{equation}
where $\bx \in \real^n$ is the state and $\bu\in\real^m$ is the control input. 
The drift vector field $\map{\bf}{\real^n}{\real^n}$ and the control matrix $\map{\bg}{\real^n}{\real^{n\times m}}$ are assumed to be smooth.  

A safety specification is encoded through a \textit{safety constraint set} $\Sc \subseteq \real^n$ where the control objective is to ensure that system trajectories $t\mapsto\bx(t)$ remain in $\Sc$ for all time $t\geq 0$, i.e., to render $\Sc$ \textit{forward invariant} under the closed-loop dynamics.
Achieving this requires $\Sc$ to be \textit{control invariant}: for every initial condition $\bx_0\in\Sc$, there exists a control input $t\mapsto \bu(t)$ such that the ensuing trajectory is contained within $\Sc$.
Since safety constraints are typically prescribed by the task and may not naturally be control invariant, it is common to instead construct a control-invariant subset $\Cc\subseteq \Sc$. CBFs provide a tool for certifying control invariance of such sets.

\begin{definition}
Let $\Sc \subseteq \real^n$ be a \textit{safety constraint set}. A set $\Cc$ is said to be \emph{safe} for \cref{sys:ctrl_affine} if $\Cc\subseteq \Sc$ and $\Cc$ is control invariant.
\end{definition}

MCBFs encode the set $\Cc$ through a symmetric matrix-valued function $\map{\bH}{\real^n}{\mathbb{S}^p}$:
\begin{equation}
\Cc = \setdefb{\bx\in\real^n}{\bH(\bx)\succeq \bzero}.
\label{eq:safe_set}
\end{equation}
Throughout, $\Bc_H\triangleq\setdefb{\bx\in\Cc}{\lambda_{\min}(\bH(\bx))=0}$ denotes the \textit{matrix-constraint boundary}.
The boundary of the safe set satisfies $\partial\Cc\subseteq\Bc_H$, although $\Bc_H$ may also contain interior points of $\Cc$.
At points $\bx\in\Cc\setminus\Bc_H$ we have $\bH(\bx)\succ\bzero$, so $\balpha(\bH(\bx))\succ\bzero$, where $\balpha$ is the spectral application of $\alpha$, i.e., $\balpha(\bH)\triangleq\sum_i\alpha(\lambda_i)\bv_i\bv_i^\top$ for $\bH=\sum_i\lambda_i\bv_i\bv_i^\top$~\cite{ong2026matrix}.
Aligned with this safe set definition, MCBFs are developed using the ordering induced by the positive semidefinite cone.
\begin{definition}[cf.~\cite{ong2026matrix}]
Let $\Cc \subset \mathbb{R}^n$ be defined by a continuously differentiable function $\map{\bH}{\real^n}{\mathbb{S}^p}$ as in~\eqref{eq:safe_set}. The function $\bH$ is a \emph{matrix control barrier function} (MCBF) for \cref{sys:ctrl_affine} if there exists $\alpha\in\Kc^e$ such that, for each $\bx\in\Cc$, there exists a $\bu\in \mathbb{R}^m$ satisfying
\begin{equation}
    \dot{\bH}(\bx,\bu) \triangleq\bL_\bf\bH(\bx)+\sum_{j=1}^m \bL_{\bg_j}\bH(\bx) u_j \succ -\balpha(\bH(\bx)),
\end{equation}
where $\bg_j$ is a column of $\bg$.
\label{def:MCBF}
\end{definition}
The matrix $\balpha(\bH)$ shares the eigenvectors of $\bH$ and, when $\bH\succeq\bzero$, satisfies $\ker\balpha(\bH)=\ker\bH$.
This fact is central to the boundary analysis of \cref{sec:scalarnotcomplete,sec:boundary}.
For $p=1$, $\bH=h$ and \cref{def:MCBF} reduces to the scalar CBF definition~\cite{ames2017control}.
MCBFs ensure the possibility of designing a continuous controller $\map{\bk}{\real^n}{\real^m}$ that satisfies $\dot\bH(\bx,\bk(\bx))\succeq-\balpha(\bH(\bx))$, which guarantees that $\bH$ remains positive semidefinite along closed-loop trajectories~\cite{ong2026matrix}.
In particular, a matrix CBF-based safety filter selects the safe control input closest to a continuous nominal controller $\map{\bk_{\rm nom}}{\real^n}{\real^m}$ through the semidefinite program (SDP):
\begin{equation}\label{eq:cbf_sdp}
\begin{aligned}
\bk(\bx) = \argmin_{\bu\in\real^m} \quad & \|\bu - \bk_{\rm nom}(\bx)\|^2 \\
\text{s.t.} \quad & \dot\bH(\bx,\bu)\succeq-\balpha(\bH(\bx)).
\end{aligned}
\end{equation}
The admissible control set at $\bx$ consists of all control inputs satisfying the constraint in~\eqref{eq:cbf_sdp}.
The strict barrier condition in \cref{def:MCBF} ensures continuity of the optimal control input. For $p=1$, this filter reduces to the scalar CBF quadratic program (CBF-QP).
A useful specific version of the MCBF is the exponential MCBF where $\alpha$ is chosen to be linear, i.e., $\alpha(r) = \gamma r$ with $\gamma>0$, in which case $\balpha(\bH) = \gamma \bH$ and no spectral decomposition of $\bH$ is required.

\subsection{Optimal Decay}
\label{sec:optimal_decay}
The choice of $\alpha$ in \cref{def:MCBF} determines the controls admitted by the barrier condition and therefore affects the deviation of the safety filter from $\bk_{\rm nom}(\bx)$.
Finding a fixed $\alpha$ that makes this condition feasible throughout the safe set can be difficult, even in the scalar case.
At states where the input cannot affect $\dot h$, the required decay can be arbitrarily large for the same value of $h$, exceeding any fixed $\alpha(h)$~\cite{ong2025properties}.
To address this difficulty, the optimal decay modification~\cite{zeng2021safety,gurriet2020scalable,ong2025properties} introduces a scalar decision variable $\omega\geq0$ that scales $\alpha$, allowing the effective decay to be optimized jointly with the control input.
For the scalar case $p=1$, this gives the following definition.
\begin{definition}
    \label{def:od-cbf}
    Let $\Cc \subset \mathbb{R}^n$ be the $0$-superlevel set of a continuously differentiable function 
    $h : \mathbb{R}^n \rightarrow \mathbb{R}$. 
    The function $h$ is an {\emph{optimal decay control barrier function}} (OD-CBF) for \cref{sys:ctrl_affine} if there exists $\alpha\in\Kc^e$ such that, for each $\bx \in \Cc$, there exist a $\bu \in \real^m$ and $\omega \geq 0$ such that
    \begin{equation}
    \label{eq:od_CBF_cond}
        L_\bf h(\bx) +  L_{\bg} h(\bx) \bu > - \omega \alpha(h(\bx)).
    \end{equation}
\end{definition}
This can be incorporated into the scalar specialization of~\eqref{eq:cbf_sdp} by scaling its decay term by $\omega$ and augmenting the cost with a penalty $\rho_\omega(\omega-\omega_d)^2$ on deviation from a nominal rate $\omega_d>0$.
If $h(\bx)>0$, then $\alpha(h(\bx))>0$, so every $\bu\in\real^m$ satisfies~\eqref{eq:od_CBF_cond} for sufficiently large $\omega$.
If $h(\bx)=0$, the decay term vanishes, and the non-strict version of~\eqref{eq:od_CBF_cond} reduces to
\begin{equation}\label{eq:nagumo_scalar}
    L_\bf h(\bx)+L_\bg h(\bx)\bu\geq0,
\end{equation}
which is Nagumo's condition~\cite{nagumo1942lage}.
Since the interior condition can always be met by increasing $\omega$, certifying $h$ as an OD-CBF reduces to finding a strictly inward control at each boundary point~\cite[Lem.~3]{ong2025properties}.

\section{Scalar Optimal Decay: Sound but Incomplete}\label{sec:scalarnotcomplete}
For $p>1$, the difficulty of finding a feasible fixed $\balpha$ is compounded by restrictions absent in the scalar CBF case.
At a boundary point, $\bH(\bx)$ may have both zero and positive eigenvalues.
Safety is only governed by the directions associated with the zero eigenvalues, whereas the positive eigenvalues may evolve more freely without immediately threatening safety. 
However, because the function $\balpha$ is applied to the entire matrix,
it limits the evolution of the strictly positive eigenvalues and may exclude control directions that can safely drive the zero eigenvalues inward.
Consequently, the MCBF condition can be infeasible even when a strictly inward control exists at every boundary point~\cite{gessow2026high}. 
The SOD mechanism in \cref{def:od-cbf} extends directly to matrix constraints by scaling the spectral class-$\Kc$ term with a single scalar decision variable.
\begin{definition}[cf.~\cite{gessow2026high}]
Let $\Cc \subset \mathbb{R}^n$ be defined by a continuously differentiable function $\map{\bH}{\real^n}{\mathbb{S}^p}$ as in~\eqref{eq:safe_set}. The function $\bH$ is a \emph{scalar optimal decay matrix control barrier function} (SOD-MCBF) for \cref{sys:ctrl_affine} if there exists $\alpha\in\Kc^e$ such that, for each $\bx\in\Cc$, there exist a $\bu\in \mathbb{R}^m$ and an $\omega \geq 0$ satisfying
\begin{equation}
    \bL_\bf\bH(\bx)+\sum_{j=1}^m \bL_{\bg_j}\bH(\bx) u_j \succ -\omega \balpha(\bH(\bx)).
\end{equation}
\label{def:Scalar_od_MCBF}
\end{definition}
The corresponding SOD safety filter optimizes the control and decay rate jointly:
\begin{equation}\label{eq:scalar_od_sdp}
\begin{aligned}
    (\bk(\bx),\omega^\star(\bx))
    ={}&\argmin_{\bu\in\real^m,\,\omega\geq0}
    \|\bu-\bk_{\rm nom}(\bx)\|^2\\
    &\qquad+\rho_\omega(\omega-\omega_d)^2\\
    \text{s.t.}\quad
    &\dot\bH(\bx,\bu)+\omega\balpha(\bH(\bx))\succeq\bzero,
\end{aligned}
\end{equation}
where $\rho_\omega>0$ weights deviation from the nominal decay rate $\omega_d>0$.
Since $\balpha(\bH)$ is positive definite on the positive eigenspace of $\bH$, varying $\omega$ provides additional slack in these eigendirections. 
Thus, the restriction imposed by the positive eigendirections can be relaxed so that they no longer preclude control directions needed to satisfy the safety condition along the kernel of $\bH$. 
Feasibility is therefore governed by the existence of an appropriate safety-preserving control direction at the boundary. 
However, it remains unclear whether a single scalar decay variable $\omega$ is sufficiently expressive to admit every such control.

Independently of how the decay term is parameterized, Nagumo's theorem motivates the safety requirement: the dynamics must point into the safe set or along its boundary~\cite{nagumo1942lage,blanchini2008set}.
At a boundary point $\bx\in\Bc_H$, Nagumo's condition requires the state velocity $\dot\bx=\bf(\bx)+\bg(\bx)\bu$ to lie in the tangent cone of the safe set.
For the matrix constraint $\bH(\bx)\succeq\bzero$, every such control necessarily satisfies
\begin{equation}\label{eq:tangentcone}
    \bV_0^\top\dot\bH(\bx,\bu)\bV_0\succeq\bzero,
\end{equation}
where the columns of $\bV_0$ form an orthonormal basis of $\ker\bH(\bx)$.
We call~\eqref{eq:tangentcone} the \emph{matrix tangent cone condition} and analyze it at all points in $\Bc_H$.
Forward invariance of the state-space set $\Cc$ is established separately in \cref{sec:invariance}.

SOD can exclude controls satisfying~\eqref{eq:tangentcone}, imposing an additional restriction beyond the matrix tangent-cone condition.

\begin{example}\label{ex:scalar_counterexample}
Consider a state $\bx$ and a control $\bu$ such that
\begin{equation}\label{eq:scalar_counterexample}
    \bH(\bx)=\begin{bmatrix}0&0\\0&\lambda\end{bmatrix},\qquad
    \dot\bH(\bx,\bu)=\begin{bmatrix}0&b\\b&c\end{bmatrix},\quad
    \lambda>0,\;b\neq0.
\end{equation}
Here, $\bV_0=[1,0]^\top$ and $\bV_0^\top\dot\bH\bV_0=0$, so~\eqref{eq:tangentcone} holds for this control choice.
However, $\det(\dot\bH+\omega\balpha(\bH))=-b^2<0$ for every $\omega\geq0$, so no scalar decay rate certifies this control.
\end{example}

To make this precise, a condition is called \emph{sound} if it certifies only controls satisfying~\eqref{eq:tangentcone}, and \emph{complete} if it certifies every such control.
SOD is sound since $\balpha(\bH)\bV_0=\bzero$, so restricting its matrix inequality to the kernel gives~\eqref{eq:tangentcone}.
It is not complete, as shown by~\eqref{eq:scalar_counterexample}.

To characterize this limitation, fix $\bx\in\Bc_H$ and $\bu\in\real^m$.
If $\bH(\bx)=\bzero$, SOD reduces to $\dot\bH(\bx,\bu)\succeq\bzero$ and is complete, so the remaining case is a boundary point where $\bH(\bx)$ has a nonempty positive eigenspace.
Let $\bV = [\bV_0 \ \bV_1]$ be an orthogonal eigenbasis of $\bH(\bx)$.
Define the transformed matrices $\tilde \bH \triangleq \bV^\top\bH\bV$ and $\widetilde{\dot{\bH}} \triangleq \bV^\top\dot\bH\bV$.
Then
\begin{equation}\label{eq:blocksplit}
    \tilde\bH=\begin{bmatrix}\bzero&\bzero\\\bzero&\bLambda_1\end{bmatrix}\!,\;\;
    \balpha(\tilde\bH)=\begin{bmatrix}\bzero&\bzero\\\bzero&\bA\end{bmatrix}\!,\;\;
    \widetilde{\dot{\bH}}=\begin{bmatrix}\bZ_{00}&\bZ_{01}\\\bZ_{01}^\top&\bZ_{11}\end{bmatrix}\!,
\end{equation}
with $\bLambda_1\succ\bzero$ and $\bA=\balpha(\bLambda_1)\succ\bzero$. The blocks $\bZ_{ij}$ depend affinely on $\bu$.
In this basis,~\eqref{eq:tangentcone} is $\bZ_{00}\succeq\bzero$, while scalar decay only changes the positive-eigenspace block $\bZ_{11}$.

The following proposition identifies the additional restriction on the cross block $\bZ_{01}$.

\begin{proposition}\label{prop:scalargap}
Let $\bx\in\Bc_H$ and $\bu\in\real^m$, with blocks as in~\eqref{eq:blocksplit}. There exists $\omega\geq0$ with $\dot\bH+\omega\balpha(\bH)\succeq\bzero$ if and only if
\begin{equation}\label{eq:albert}
    \textup{(i)}\;\bZ_{00}\succeq\bzero,\qquad
    \textup{(ii)}\;(\bI-\bZ_{00}\bZ_{00}^{+})\bZ_{01}=\bzero,
\end{equation}
where $\bZ_{00}^{+}$ is the Moore--Penrose pseudoinverse.
\end{proposition}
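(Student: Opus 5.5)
Since $\bV$ is orthogonal, $\dot\bH+\omega\balpha(\bH)\succeq\bzero$ holds if and only if its congruence $\widetilde{\dot\bH}+\omega\balpha(\tilde\bH)\succeq\bzero$ does, and by \eqref{eq:blocksplit} this is the block matrix
\[
\bJ(\omega)\triangleq\begin{bmatrix}\bZ_{00}&\bZ_{01}\\\bZ_{01}^\top&\bZ_{11}+\omega\bA\end{bmatrix}.
\]
The whole proof therefore reduces to a statement about this one-parameter family. The tool is the generalized Schur complement characterization of positive semidefiniteness: a symmetric block matrix $\begin{bmatrix}\bP&\bQ\\\bQ^\top&\bR\end{bmatrix}$ is positive semidefinite if and only if (a) $\bP\succeq\bzero$, (b) $(\bI-\bP\bP^{+})\bQ=\bzero$, and (c) $\bR-\bQ^\top\bP^{+}\bQ\succeq\bzero$. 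This is standard (Albert's theorem). If needed, it follows from the congruence $\bP=\bU\,\mathrm{diag}(\bD,\bzero)\bU^\top$ with $\bD\succ\bzero$, combined with the observation that a psd matrix with a zero diagonal block must have zero off-diagonal entries in the corresponding rows.

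\emph{Necessity.} Suppose some $\omega\geq0$ makes $\bJ(\omega)\succeq\bzero$. Conditions (a) and (b) of Albert's theorem then give (i) and (ii) directly, since the $(0,0)$ and $(0,1)$ blocks of $\bJ(\omega)$ do not depend on $\omega$.

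\emph{Sufficiency.} Assume (i) and (ii). By Albert's theorem it remains to find $\omega\geq0$ such that
\[
\bZ_{11}+\omega\bA-\bZ_{01}^\top\bZ_{00}^{+}\bZ_{01}\succeq\bzero .
\]
Since $\bA\succ\bzero$, we have $\omega\bA\succeq\omega\lambda_{\min}(\bA)\bI$. Any
\[
\omega\ge\max\Bigl\{0,\;-\lambda_{\min}\bigl(\bZ_{11}-\bZ_{01}^\top\bZ_{00}^{+}\bZ_{01}\bigr)/\lambda_{\min}(\bA)\Bigr\}
\]
therefore works. The case where $\bZ_{00}$ is empty (no kernel) does not occur because $\bx\in\Bc_H$. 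The case $\bH=\bzero$ was handled before the proposition: there $\bZ_{01}$ and $\bZ_{11}$ are empty and the claim is just (i).

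The main obstacle is only ensuring the generalized Schur complement lemma is cited or justified correctly when $\bZ_{00}$ is singular. I would prove it directly by diagonalizing $\bZ_{00}$, using (ii) to show that $\bZ_{01}$ has its range in $\range\bZ_{00}$, and then completing the square on that range. The rest is bookkeeping.
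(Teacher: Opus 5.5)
Your proposal is correct and follows the paper's proof essentially verbatim. Both arguments use orthogonal congruence to the block form and the generalized Schur complement (Albert) conditions, observe that (i) and (ii) do not depend on $\omega$, and use $\bA\succ\bzero$ to make the Schur complement condition hold for large $\omega$; you additionally make the threshold explicit via $\lambda_{\min}(\bA)$. One cosmetic point: avoid naming the block matrix $\bJ(\omega)$, since the paper reserves $\bJ$ for the arm Jacobian.
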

\begin{proof}
The constraint $\dot\bH+\omega\balpha(\bH)\succeq\bzero$ is equivalent to
$\left[\begin{smallmatrix}\bZ_{00}&\bZ_{01}\\\bZ_{01}^\top&\bZ_{11}+\omega\bA\end{smallmatrix}\right]\succeq\bzero$.
By the generalized Schur complement conditions for positive semidefiniteness with a singular diagonal block~\cite[App.~A.5.5]{boyd2004convex}, \cite{zhang2006schur}, this holds if and only if (i), (ii), and
\textup{(iii)}~$\bZ_{11}+\omega\bA-\bZ_{01}^\top\bZ_{00}^{+}\bZ_{01}\succeq\bzero$.
Conditions (i) and (ii) do not depend on $\omega$, while (iii) holds for all sufficiently large $\omega\geq0$ since $\bA\succ\bzero$.
\end{proof}

Condition (ii) requires the columns of $\bZ_{01}$ to lie in $\range\bZ_{00}$.
When $\bZ_{00}\succ\bzero$, we have $\bZ_{00}^{+}=\bZ_{00}^{-1}$, so $\bI-\bZ_{00}\bZ_{00}^{+}=\bzero$ and condition (ii) holds for any $\bZ_{01}$.
SOD therefore certifies every control satisfying $\bZ_{00}\succ\bzero$.
The gap appears when the kernel block is itself singular and the columns of $\bZ_{01}$ are not in its range. For example, a grazing control with $\bZ_{00}=\bzero$ and $\bZ_{01}\neq\bzero$ violates condition (ii).

Even when a scalar certificate exists, the required decay can be arbitrarily large.
The following proposition quantifies this with a lower bound.
\begin{proposition}\label{prop:scalarcost}
Let $\bx\in\Cc$ and $\bu\in\real^m$, and write the matrices in
the basis of~\eqref{eq:blocksplit}. When $\bH(\bx)\succ\bzero$
the kernel-associated blocks are empty and
$\bZ_{01}^\top\bZ_{00}^{-1}\bZ_{01}\triangleq\bzero$, and when
$\bH(\bx)$ is singular suppose that $\bZ_{00}\succ\bzero$.
For $\omega\geq0$, the condition $\dot\bH+\omega\balpha(\bH)\succeq\bzero$ holds if and only if
\begin{equation}\label{eq:scalarcost}
    \omega\geq
    \max\Big\{0,\;
    \lambda_{\max}\big(
    \bA^{-\frac12}
    \big(
    \bZ_{01}^\top\bZ_{00}^{-1}\bZ_{01}
    -\bZ_{11}
    \big)
    \bA^{-\frac12}
    \big)\Big\}.
\end{equation}
In particular, if $\lambda_{\min}(\bZ_{00})\to0$ along a sequence
of points at which $\bH$ is singular, while
$\|\bZ_{01}^\top\bv\|\geq\delta>0$ for unit eigenvectors $\bv$ of
$\bZ_{00}$ associated with $\lambda_{\min}(\bZ_{00})$ and $\bA$,
$\bZ_{11}$ remain bounded, then
$\dot\bH+\omega\balpha(\bH)\succeq\bzero$ requires
\begin{equation}\label{eq:scalarcost_blowup}
    \omega
    \geq
    \frac{1}{\lambda_{\max}(\bA)}
    \left(
    \frac{\|\bZ_{01}^\top\bv\|^2}
    {\lambda_{\min}(\bZ_{00})}
    -\lambda_{\max}(\bZ_{11})
    \right)
\end{equation}
where the bound goes to infinity as $\lambda_{\min}(\bZ_{00})\to0$.
\end{proposition}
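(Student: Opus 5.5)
The plan is to reuse the block decomposition~\eqref{eq:blocksplit} and the Schur complement argument from the proof of \cref{prop:scalargap}, now with $\bZ_{00}\succ\bzero$ invertible. In that case conditions (i) and (ii) of~\eqref{eq:albert} hold automatically. The ordinary Schur complement then says that $\dot\bH+\omega\balpha(\bH)\succeq\bzero$ is equivalent to condition (iii) alone,
\begin{equation*}
\bZ_{11}+\omega\bA-\bZ_{01}^\top\bZ_{00}^{-1}\bZ_{01}\succeq\bzero .
\end{equation*}
When $\bH(\bx)\succ\bzero$ the kernel blocks are empty. By the stated convention the condition is simply $\bZ_{11}+\omega\bA\succeq\bzero$ with $\bV=\bV_1$, so both cases are handled uniformly. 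Throughout I use that the congruence $\widetilde{\dot\bH}=\bV^\top\dot\bH\bV$ with orthogonal $\bV$ preserves semidefiniteness.

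Next, since $\bA\succ\bzero$, I congruence-transform by $\bA^{-1/2}$. Writing $\bG\triangleq\bA^{-1/2}(\bZ_{01}^\top\bZ_{00}^{-1}\bZ_{01}-\bZ_{11})\bA^{-1/2}$, condition (iii) becomes $\omega\bI-\bG\succeq\bzero$, which holds iff $\omega\geq\lambda_{\max}(\bG)$. Intersecting with the requirement $\omega\geq0$ gives exactly~\eqref{eq:scalarcost}, with the $\max\{0,\cdot\}$ accounting for the case $\lambda_{\max}(\bG)<0$. This proves the equivalence.

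For the lower bound~\eqref{eq:scalarcost_blowup}, I take any feasible $\omega$, so $\omega\bA+\bZ_{11}\succeq\bZ_{01}^\top\bZ_{00}^{-1}\bZ_{01}$. I test this with the vector $\bw\triangleq\bZ_{01}^\top\bv$, which is nonzero since $\|\bw\|\geq\delta$. On the left, $\bw^\top(\omega\bA+\bZ_{11})\bw\leq(\omega\lambda_{\max}(\bA)+\lambda_{\max}(\bZ_{11}))\|\bw\|^2$. On the right, $\bZ_{00}^{-1}\succeq\bzero$ and $\bv\bv^\top\preceq\bI$ give $\bZ_{00}^{-1}\succeq\lambda_{\min}(\bZ_{00})^{-1}\bv\bv^\top$ (this is the step to check carefully, using that $\bv$ is an eigenvector of $\bZ_{00}^{-1}$ with eigenvalue $1/\lambda_{\min}$ and the spectral decomposition). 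Hence
\begin{equation*}
\bw^\top\bZ_{01}^\top\bZ_{00}^{-1}\bZ_{01}\bw\geq\frac{(\bv^\top\bZ_{01}\bw)^2}{\lambda_{\min}(\bZ_{00})}=\frac{\|\bw\|^4}{\lambda_{\min}(\bZ_{00})}.
\end{equation*}
Dividing by $\|\bw\|^2>0$ and rearranging with $\lambda_{\max}(\bA)>0$ yields~\eqref{eq:scalarcost_blowup}.

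Finally, boundedness of $\bA$ and $\bZ_{11}$ keeps $\lambda_{\max}(\bA)$ and $\lambda_{\max}(\bZ_{11})$ bounded, and $\|\bZ_{01}^\top\bv\|^2\geq\delta^2$. So the right-hand side is at least $(\delta^2/\lambda_{\min}(\bZ_{00})-C)/C'$, which diverges as $\lambda_{\min}(\bZ_{00})\to0$. The only delicate point is that $\lambda_{\max}(\bA)$ must stay bounded above, which the hypotheses give. I expect the main obstacle to be choosing the test vector $\bw=\bZ_{01}^\top\bv$ correctly, rather than $\bv$ itself, so that the quadratic form lives in the positive-eigenspace coordinates.
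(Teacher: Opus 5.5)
Your proposal is correct and follows the paper's proof essentially step for step: the ordinary Schur complement with $\bZ_{00}\succ\bzero$, congruence by $\bA^{-1/2}$, and then the test vector $\bw=\bZ_{01}^\top\bv$ together with $\bZ_{00}^{-1}\succeq\bv\bv^\top/\lambda_{\min}(\bZ_{00})$. The one thing to tidy is the justification of that last inequality: it follows from the spectral decomposition of $\bZ_{00}^{-1}$ with $\bv$ as an eigenvector, as your parenthetical says, and not from $\bZ_{00}^{-1}\succeq\bzero$ and $\bv\bv^\top\preceq\bI$ alone.
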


\begin{proof}
For $\bH\succ\bzero$, \eqref{eq:scalarcost} follows by congruence with $\bA^{-1/2}$ under the stated convention.
Otherwise, $\bZ_{00}\succ\bzero$, and the Schur complement~\cite{zhang2006schur} gives
\[
    \dot\bH+\omega\balpha(\bH)\succeq\bzero
    \quad\Longleftrightarrow\quad
    \omega\bA
    \succeq
    \bZ_{01}^\top\bZ_{00}^{-1}\bZ_{01}
    -\bZ_{11}.
\]
Congruence with $\bA^{-1/2}$ then gives~\eqref{eq:scalarcost}.

For~\eqref{eq:scalarcost_blowup}, set $\lambda=\lambda_{\min}(\bZ_{00})$ and $\bw=\bZ_{01}^\top\bv$.
Since $\bZ_{00}^{-1}\succeq\bv\bv^\top/\lambda$, evaluating the preceding inequality at $\bw$ gives
\begin{align*}
    \omega\,\bw^\top\bA\bw
    &\geq
    \bw^\top\bZ_{01}^\top\bZ_{00}^{-1}\bZ_{01}\bw
    -\bw^\top\bZ_{11}\bw\\
    &\geq \frac{\|\bw\|^4}{\lambda}
    -\lambda_{\max}(\bZ_{11})\|\bw\|^2.
\end{align*}
Using $\omega\geq0$ and $\bw^\top\bA\bw\leq\lambda_{\max}(\bA)\|\bw\|^2$, and dividing by $\|\bw\|^2>0$, yields~\eqref{eq:scalarcost_blowup}.
Divergence follows from $\|\bw\|\geq\delta$, $\lambda\to0$, and boundedness of $\bA$ and $\bZ_{11}$.
\end{proof}

Thus, scalar decay can make a control increasingly costly to certify even before certification becomes infeasible.
When decay is penalized, this can cause the filter to modify an otherwise safe control.

\Cref{sec:example} demonstrates these limitations in a two-link arm.
The following section introduces a matrix-valued decay variable that remains sound and recovers completeness.

\section{Matrix Optimal Decay: Sound and Complete}\label{sec:matrixod}
To retain all controls that preserve safety at the boundary, we generalize the scalar decay variable to a matrix-valued variable $\bOmega\in\real^{p\times p}$.
\begin{definition}\label{def:MODMCBF}
Let $\Cc \subset \mathbb{R}^n$ be defined by a continuously differentiable function $\map{\bH}{\real^n}{\mathbb{S}^p}$ as in~\eqref{eq:safe_set}. 
The function $\bH$ is a \emph{matrix optimal decay MCBF} (MOD-MCBF) for \cref{sys:ctrl_affine} if there exists $\alpha\in\Kc^e$ such that, for each $\bx\in\Cc$, there exist a $\bu\in \mathbb{R}^m$ and an $\bOmega \in \real^{p\times p}$ satisfying
\begin{equation}\label{eq:odcond_strict}
    \dot\bH(\bx,\bu)+\bOmega^\top\balpha(\bH(\bx))+\balpha(\bH(\bx))\,\bOmega\;\succ\;\bzero.
\end{equation}
\end{definition}
A strict inequality in \cref{def:MODMCBF} ensures regularity of the safety filter.
The admissible controls at the boundary are characterized by the non-strict version of~\eqref{eq:odcond_strict}
\begin{equation}\label{eq:odcond}
    \dot\bH(\bx,\bu)+\bOmega^\top\balpha(\bH(\bx))+\balpha(\bH(\bx))\,\bOmega\;\succeq\;\bzero.
\end{equation}

Soundness and completeness are understood in the sense of \cref{sec:scalarnotcomplete}, and \cref{sec:boundary} shows that the matrix multiplier recovers the completeness lost by SOD.
Since $\balpha(\bH)$ is symmetric, the decay term equals $2\sym\big(\balpha(\bH)\bOmega\big)$.
Setting $\bOmega=\tfrac12\bI$ recovers the standard MCBF condition, while $\bOmega=\tfrac\omega2\bI$, $\omega\geq0$, recovers SOD.
Hence, every control certified by either formulation is also certified by~\eqref{eq:odcond} at the same state.

\subsection{Soundness and Completeness}\label{sec:boundary}
Fix $\bx\in\Bc_H$ and a control $\bu$, and use the eigenbasis and blocks introduced in~\eqref{eq:blocksplit}.
If $\bH(\bx)=\bzero$, both~\eqref{eq:odcond} and~\eqref{eq:tangentcone} reduce to $\dot\bH(\bx,\bu)\succeq\bzero$, so the remaining case is a boundary point where $\bH(\bx)$ has a nonempty positive eigenspace.
Define $\tilde\bOmega\triangleq\bV^\top\bOmega\bV$ with conformable blocks $\bOmega_{ij}$. In this basis, the decay term of~\eqref{eq:odcond} has the block form
\begin{equation}\label{eq:decayblocks}
    \tilde\bOmega^\top\balpha(\tilde\bH)+\balpha(\tilde\bH)\tilde\bOmega=
    \begin{bmatrix}\bzero & \bOmega_{10}^\top\bA\\ \bA\bOmega_{10} & \bA\bOmega_{11}+\bOmega_{11}^\top\bA\end{bmatrix}.
\end{equation}
Two important observations follow from~\eqref{eq:decayblocks}.
First, the kernel-kernel block of the decay term is zero for every $\bOmega$.
Second, the blocks $\bOmega_{00}$ and $\bOmega_{01}$ do not appear in the constraint since they only multiply the zero block of $\balpha(\tilde\bH)$.

\begin{theorem}\label{thm:equiv}
Let $\bx\in\Bc_H$ and $\bu\in\real^m$. Then
\begin{multline*}
    \exists\,\bOmega\in\real^{p\times p}:\;
    \dot\bH+\bOmega^\top\balpha(\bH)+\balpha(\bH)\bOmega\succeq\bzero\\
    \iff\;
    \bV_0^\top\dot\bH\bV_0\succeq\bzero.
\end{multline*}
\end{theorem}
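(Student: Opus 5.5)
We work in the eigenbasis of~\eqref{eq:blocksplit}, since the constraint is invariant under the orthogonal congruence by $\bV$. The case $\bH(\bx)=\bzero$ was already handled before~\eqref{eq:decayblocks}, so we assume $\bH(\bx)$ has a nonempty positive eigenspace. Using~\eqref{eq:decayblocks}, the matrix inequality~\eqref{eq:odcond} becomes
\begin{equation*}
    \begin{bmatrix}\bZ_{00} & \bZ_{01}+\bOmega_{10}^\top\bA\\ \bZ_{01}^\top+\bA\bOmega_{10} & \bZ_{11}+\bA\bOmega_{11}+\bOmega_{11}^\top\bA\end{bmatrix}\succeq\bzero,
\end{equation*}
and the right-hand side of the theorem reads $\bZ_{00}\succeq\bzero$.

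\emph{Soundness} ($\Rightarrow$). The upper-left diagonal block of a positive semidefinite matrix is positive semidefinite. The decay term contributes nothing to this block, so its upper-left block is exactly $\bZ_{00}=\bV_0^\top\dot\bH\bV_0$. This gives $\bV_0^\top\dot\bH\bV_0\succeq\bzero$ immediately.

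\emph{Completeness} ($\Leftarrow$). Assume $\bZ_{00}\succeq\bzero$. Since $\bA\succ\bzero$ is invertible, we can choose $\bOmega_{10}$ to cancel the cross block exactly:
\begin{equation*}
    \bOmega_{10}=-\bA^{-1}\bZ_{01}^\top.
\end{equation*}
We then pick $\bOmega_{11}=\tfrac{\omega}{2}\bI$ with $\omega$ large enough that $\bZ_{11}+\omega\bA\succeq\bzero$. Any $\omega\geq\max\{0,\lambda_{\max}(-\bA^{-1/2}\bZ_{11}\bA^{-1/2})\}$ works, by congruence with $\bA^{-1/2}$. The blocks $\bOmega_{00}$ and $\bOmega_{01}$ are arbitrary, say zero. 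The transformed matrix is then block diagonal with both diagonal blocks positive semidefinite, hence positive semidefinite. Setting $\bOmega=\bV\tilde\bOmega\bV^\top$ and undoing the congruence yields~\eqref{eq:odcond}.

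The main point to get right is the cancellation step. It is exactly here that the scalar multiplier fails in \cref{prop:scalargap}, condition~(ii): SOD has no off-diagonal freedom, whereas the block $\bA\bOmega_{10}$ can be made equal to any matrix because $\bA$ is invertible. Once $\bOmega_{10}$ is chosen this way, the generalized Schur complement argument is no longer needed. The only other thing to verify is the block computation~\eqref{eq:decayblocks}, which uses $\tilde\bOmega^\top\balpha(\tilde\bH)$ with $\balpha(\tilde\bH)=\mathrm{diag}(\bzero,\bA)$. This gives the top-right block $\bOmega_{10}^\top\bA$, and hence the bottom-left block $\bA\bOmega_{10}$, consistent with the choice above.
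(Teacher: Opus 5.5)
Your proof is correct and follows the paper's argument: the kernel--kernel block of the decay term vanishes, which gives soundness, and the choice $\bOmega_{10}=-\bA^{-1}\bZ_{01}^\top$ cancels the cross block exactly, which gives completeness. The differences are cosmetic: you read soundness off the principal submatrix rather than the quadratic form on $\ker\bH$, and you take $\bOmega_{11}=\tfrac{\omega}{2}\bI$ (giving the block $\bZ_{11}+\omega\bA$) where the paper takes $\bOmega_{11}=c\,\bA^{-1}$ (giving $\bZ_{11}+2c\bI$); both choices work.
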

\begin{proof}
($\Rightarrow$) Let $\bv$ be such that $\bv^\top\bH\bv=0$. Since $\bH\succeq\bzero$, $\bv^\top\bH\bv=\|\bH^{1/2}\bv\|^2=0$ forces $\bH\bv=\bzero$, and since $\balpha$ acts spectrally with $\alpha(0)=0$, also $\balpha(\bH)\bv=\bzero$.
Both terms of the decay contribution involve $\balpha(\bH)\bv$:
\[
    \bv^\top\big(\bOmega^\top\balpha(\bH)+\balpha(\bH)\bOmega\big)\bv
    =2(\bOmega\bv)^\top\balpha(\bH)\bv=0,
\]
regardless of $\bOmega$.
Hence, if $\bG\triangleq\dot\bH+\bOmega^\top\balpha(\bH)+\balpha(\bH)\bOmega\succeq\bzero$ for some $\bOmega$, then $0\leq\bv^\top\bG\bv=\bv^\top\dot\bH\bv$ for every $\bv\in\ker\bH=\range\bV_0$, which is~\eqref{eq:tangentcone}.

($\Leftarrow$) Suppose $\bZ_{00}=\bV_0^\top\dot\bH\bV_0\succeq\bzero$, and choose
\[
    \bOmega_{10}=-\bA^{-1}\bZ_{01}^\top,\qquad \bOmega_{11}=c\,\bA^{-1},\qquad c>0,
\]
with $\bOmega_{00},\bOmega_{01}$ arbitrary.
Using $\bA=\bA^\top$, we have $\bA\bOmega_{10}=-\bZ_{01}^\top$, $\bOmega_{10}^\top\bA=-\bZ_{01}$, and $\bA\bOmega_{11}+\bOmega_{11}^\top\bA=2c\bI$, so by~\eqref{eq:blocksplit} and~\eqref{eq:decayblocks}
\[
    \widetilde{\dot{\bH}}+\tilde\bOmega^\top\balpha(\tilde\bH)+\balpha(\tilde\bH)\tilde\bOmega=
    \begin{bmatrix}\bZ_{00}&\bzero\\\bzero&\bZ_{11}+2c\bI\end{bmatrix}.
\]
For sufficiently large $c$, $\bZ_{11}+2c\bI\succ\bzero$, so the block matrix is positive semidefinite. 
Setting $\bOmega=\bV\tilde\bOmega\bV^\top$, the result follows by orthogonal congruence from $\bZ_{00}\succeq\bzero$.
\end{proof}
\Cref{thm:equiv} can alternatively be established from the non-strict projection lemma~\cite[Thm.~1]{meijer2024non}.
The constructive proof shows how the matrix multiplier cancels the coupling that scalar decay cannot remove; \cref{tab:summary} compares the three decay families.

\begin{table}[t]
\vspace*{4pt}
\centering
\caption{Decay families at a singular boundary point.}
\label{tab:summary}
\footnotesize
\begin{tabular}{lll}
\toprule
Condition & Decay term & Feasibility\\
\hline
MCBF & $\balpha(\bH)$ & $\dot\bH+\balpha(\bH)\succeq\bzero$ \\[1pt]
SOD-MCBF & $\omega\,\balpha(\bH)$ & $\bZ_{00}\succeq\bzero$ \emph{and} $\bZ_{01}\in \range(\bZ_{00})$ \\[1pt]
MOD-MCBF & $2\sym(\balpha(\bH)\bOmega)$ & $\bZ_{00}\succeq\bzero$ (sound and complete) \\
\hline
\end{tabular}
\end{table}

\begin{remark}
\label{sec:lammin}
The scalarization $h_\lambda(\bx)=\lambda_{\min}(\bH(\bx))$ defines the same safe set~\eqref{eq:safe_set} but is generally nonsmooth.
At boundary points where the zero eigenvalue is simple, however, $h_\lambda$ is continuously differentiable in a neighborhood by classical perturbation theory~\cite{lax2007linear}. For a unit vector $\bv_0$ spanning $\ker\bH(\bx)$, its derivative is
\begin{equation}\label{eq:eigderiv}
    \dot h_\lambda(\bx,\bu)=\bv_0^\top\dot\bH(\bx,\bu)\bv_0.
\end{equation}
Since $h_\lambda(\bx)=0$ on the boundary, both the fixed-rate and OD-CBF conditions reduce there to $\dot h_\lambda(\bx,\bu)\geq0$, yielding the following equivalence.
\end{remark}

\begin{corollary}\label{thm:lammin}
Let $\bx\in\Bc_H$ with $\lambda_{\min}(\bH(\bx))=0$ simple, and fix $\bu\in\real^m$. The $\lambda_{\min}$ CBF condition holds at $(\bx,\bu)$ if and only if the MOD condition~\eqref{eq:odcond} is satisfiable at $(\bx,\bu)$ by some $\bOmega\in\real^{p\times p}$.
\end{corollary}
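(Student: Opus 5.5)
The corollary follows by chaining \cref{sec:lammin} with \cref{thm:equiv}. The key is that a simple zero eigenvalue makes the kernel one-dimensional, so the matrix tangent cone condition~\eqref{eq:tangentcone} becomes a scalar inequality.

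First, I will pin down what the $\lambda_{\min}$ CBF condition means at a boundary point. By \cref{sec:lammin}, simplicity of the zero eigenvalue makes $h_\lambda=\lambda_{\min}\circ\bH$ continuously differentiable near $\bx$, with derivative~\eqref{eq:eigderiv}, where $\bv_0$ is a unit vector spanning $\ker\bH(\bx)$. Since $h_\lambda(\bx)=0$ and $\alpha(0)=0$, the non-strict fixed-decay condition $\dot h_\lambda\geq-\alpha(h_\lambda)$ collapses to $\dot h_\lambda(\bx,\bu)\geq0$. The optimal-decay version $\dot h_\lambda\geq-\omega\alpha(h_\lambda)$ collapses to the same inequality for every $\omega\geq0$. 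So either reading of ``the $\lambda_{\min}$ CBF condition'' is equivalent to $\bv_0^\top\dot\bH(\bx,\bu)\bv_0\geq0$. I will state explicitly that the non-strict version is the one in force at the boundary, matching the convention used for~\eqref{eq:nagumo_scalar} and~\eqref{eq:odcond}.

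Second, because the zero eigenvalue is simple, $\ker\bH(\bx)$ is one-dimensional, so we may take $\bV_0=\bv_0$. Then~\eqref{eq:tangentcone} is exactly the scalar inequality $\bv_0^\top\dot\bH\bv_0\geq0$. \Cref{thm:equiv} says this holds if and only if some $\bOmega\in\real^{p\times p}$ satisfies~\eqref{eq:odcond}. Chaining the two equivalences gives the corollary.

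There is no real obstacle here. The one subtlety is that the orthonormal basis $\bV_0$ in \cref{thm:equiv} is determined only up to sign when the kernel has dimension one. Since $\bV_0^\top\dot\bH\bV_0$ is a quadratic form, the sign does not matter. I will also note the degenerate case $\bH(\bx)=\bzero$: it can only have a simple zero eigenvalue when $p=1$, and then the statement is immediate.
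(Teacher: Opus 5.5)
Your proposal is correct and matches the paper's proof: both conditions reduce to $\bv_0^\top\dot\bH(\bx,\bu)\bv_0\geq0$, the $\lambda_{\min}$ condition via \eqref{eq:eigderiv} with $h_\lambda(\bx)=0$, and the MOD condition via \cref{thm:equiv} with $\bV_0=\bv_0$. Your remarks on the sign ambiguity of $\bv_0$ and the degenerate $p=1$ case are harmless additions that the paper leaves implicit.
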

\begin{proof}
Both conditions are equivalent to $\bv_0^\top\dot\bH(\bx,\bu)\bv_0\geq0$: the former by~\eqref{eq:eigderiv}, the latter by \cref{thm:equiv} with $\bV_0=\bv_0$.
\end{proof}

\section{Forward Invariance}\label{sec:invariance}
A key observation is that the decay term influences the eigenvalues of $\bH$ strictly through the symmetric part of the multiplier: for a unit eigenvector $\bv$ of $\bH$ with eigenvalue $\lambda$, we have $\balpha(\bH)\bv=\alpha(\lambda)\bv$, so evaluating the quadratic form of~\eqref{eq:odcond} at $\bv$ gives
\begin{equation}\label{eq:perdirection}
    \bv^\top\dot\bH\bv \;\geq\; -2\,\alpha(\lambda)\,\bv^\top\sym(\bOmega)\,\bv.
\end{equation}
Specifically, each eigendirection receives its own decay rate in place of the single global rate of the scalar relaxation, while the skew part of $\bOmega$, which performs the cancellation in \cref{thm:equiv}, does not enter the eigenvalue evolution.
Forward invariance then follows in the same manner as for MCBFs and high-order MCBFs~\cite{ong2026matrix,gessow2026high}.

\begin{theorem}
\label{thm:invariance}
Let $\alpha\in\Kc^e$, and let $\map{\bk}{\real^n}{\real^m}$ and $\map{\bOmega}{\real^n}{\real^{p\times p}}$ be continuous functions satisfying
\begin{equation}\label{eq:odcond_closedloop}
    \dot\bH(\bx,\bk(\bx))+\bOmega(\bx)^\top\balpha(\bH(\bx))+\balpha(\bH(\bx))\,\bOmega(\bx)\succeq\bzero
\end{equation}
for all $\bx$ in an open neighborhood of $\Cc$.
Then $\Cc$ is forward invariant for the closed-loop system $\dot\bx=\bf(\bx)+\bg(\bx)\bk(\bx)$.
\end{theorem}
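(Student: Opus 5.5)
The plan is to reduce the claim to a scalar comparison argument on $h_\lambda(t)\triangleq\lambda_{\min}(\bH(\bx(t)))$ along closed-loop trajectories, as is done for MCBFs in~\cite{ong2026matrix}. Since $\bf,\bg$ are smooth and $\bk$ is continuous, solutions of $\dot\bx=\bf(\bx)+\bg(\bx)\bk(\bx)$ exist, though they need not be unique. Let $\Cc'\supseteq\Cc$ denote the open neighborhood on which~\eqref{eq:odcond_closedloop} holds, and let $\bx(\cdot)$ be any solution with $\bx(0)\in\Cc$. Suppose, for contradiction, that it leaves $\Cc$. Then there is a last time $t_0$ with $\bx(t_0)\in\Cc$, and $\bx(t)\in\Cc'\setminus\Cc$ for $t\in(t_0,t_0+\epsilon)$. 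Write $\bG(t)\triangleq\bH(\bx(t))$, which is $C^1$ in $t$ with $\dot\bG=\dot\bH(\bx,\bk(\bx))$. The function $h_\lambda(t)$ is then Lipschitz, and hence absolutely continuous, on compact intervals.

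The key step is a Dini-derivative bound at each $t$ in the interval. Since $\bG(t+s)=\bG(t)+s\dot\bG(t)+o(s)$ and $\lambda_{\min}$ is concave and $1$-Lipschitz, I will use the standard formula
\[
D^+h_\lambda(t)=\min\{\bv^\top\dot\bG(t)\bv:\ \|\bv\|=1,\ \bG(t)\bv=h_\lambda(t)\bv\},
\]
the minimum taken over the eigenspace of the smallest eigenvalue. For every such $\bv$, the quadratic form~\eqref{eq:perdirection} gives
\[
\bv^\top\dot\bG\bv\ge -2\alpha(h_\lambda)\,\bv^\top\sym(\bOmega(\bx(t)))\bv.
\]
On the compact trajectory segment, $\bOmega$ is bounded by continuity, say $\|\sym\bOmega\|\le M$. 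Because $\alpha(h_\lambda)<0$ wherever $h_\lambda<0$, the right-hand side is at least $-2M|\alpha(h_\lambda)|$, which yields the scalar differential inequality
\[
D^+h_\lambda(t)\ge -2M|\alpha(h_\lambda(t))|.
\]

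The main obstacle is that this bound is two-sided in sign. The skew part of $\bOmega$ drops out of the eigenvalue evolution, but $\sym(\bOmega)$ may be indefinite, so the decay term can be positive or negative. A naive reading would therefore say nothing prevents $h_\lambda$ from decreasing when $h_\lambda<0$. What saves the argument is that $\alpha$ is locally Lipschitz with $\alpha(0)=0$. Hence $|\alpha(r)|\le L|r|$ near $0$, and the inequality becomes $D^+h_\lambda\ge -2ML|h_\lambda|$. By the comparison lemma for Dini derivatives, $h_\lambda(t)\ge h_\lambda(t_0)e^{-2ML(t-t_0)}\ge0$ on $[t_0,t_0+\epsilon)$. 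This contradicts $\bx(t)\notin\Cc$, which would require $h_\lambda(t)<0$.

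Two technical points need care. First, I must justify the Dini-derivative formula for $\lambda_{\min}$ at repeated eigenvalues; this follows from the variational characterization, or I can cite the nonsmooth analysis in~\cite{ong2026matrix,glotfelter2017nonsmooth}. Second, the comparison lemma must hold without uniqueness of solutions, which is fine because it is applied to each trajectory separately.
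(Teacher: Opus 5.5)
Your proposal is correct and follows essentially the same route as the paper. Both arguments bound $\sym(\bOmega)$ on a compact trajectory segment by continuity, apply the per-direction inequality~\eqref{eq:perdirection} at unit minimum-eigenvalue eigenvectors to obtain $D^+\lambda_{\min}\geq -2M|\alpha(\lambda_{\min})|$, and close with local Lipschitzness of $\alpha$ and a comparison argument. You make explicit the steps the paper delegates to~\cite{ong2026matrix,gessow2026high}: the directional-derivative formula for $\lambda_{\min}$ at repeated eigenvalues and the last-exit-time contradiction.
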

\begin{proof}
Let $t\mapsto\bx(t)$ be a closed-loop trajectory with $\bx(0)\in\Cc$, and consider any compact interval on which $\bx(t)$ remains in the neighborhood where~\eqref{eq:odcond_closedloop} holds.
By continuity of $\bOmega$, there exists $\bar\omega>0$ such that $\|\sym(\bOmega(\bx(t)))\|_2\leq\bar\omega$ on this interval. Thus,~\eqref{eq:perdirection} implies
\[
    \bv^\top\dot\bH\bv\;\geq\;-2\bar\omega\,\big|\alpha\big(\lambda_{\min}(\bH)\big)\big|
\]
for every unit minimum-eigenvalue eigenvector $\bv$. Local Lipschitzness of $\alpha$ and the standard comparison argument then imply forward invariance as in~\cite{ong2026matrix,gessow2026high}.
\end{proof}

In practice, the certified control input can be computed with the matrix optimal decay safety filter (MOD-SDP), which augments~\eqref{eq:cbf_sdp} with the decay variable and penalizes its deviation from a nominal value $\bOmega_d\triangleq\tfrac{\omega_d}{2}\bI$ with weight $\rho_\Omega>0$:
\begin{equation}\label{eq:odsdp}
\begin{aligned}
    \big(\bk(\bx),\bOmega(\bx)\big)&=\argmin_{\bu,\,\bOmega}\; \|\bu-\bk_{\rm nom}\|^2\\
    &\qquad+\rho_\Omega\|\bOmega-\bOmega_d\|_F^2\\
    \textup{s.t.}\; & \dot\bH(\bx,\bu)+\bOmega^\top\balpha(\bH)+\balpha(\bH)\,\bOmega\succeq\bzero,
\end{aligned}
\end{equation}
where $\omega_d>0$ is the nominal decay rate of the SOD framework ($\omega_d=1$ recovers the standard MCBF decay at $\bOmega=\bOmega_d$).
At fixed $\bx$, the constraint is a linear matrix inequality (LMI) jointly affine in $(\bu,\bOmega)$, so~\eqref{eq:odsdp} is a convex SDP. 
Note that for $p>1$ the SOD filter is also an SDP with the same cone dimension, while the matrix formulation has $p^2$ decay variables instead of one.

The existence of a continuous minimizer that enforces safety rests on a mild regularity property of the safe set.

\begin{definition}[cf.~\cite{gessow2026high}]\label{def:wellposed}
The safe set $\Cc$ in~\eqref{eq:safe_set} is \emph{well-posed} for~\eqref{sys:ctrl_affine} if for each $\bx\in\Bc_H$ there exists $\bu\in\real^m$ such that $\bV_0(\bx)^\top\dot\bH(\bx,\bu)\,\bV_0(\bx)\succ\bzero$, where the columns of $\bV_0(\bx)$ form an orthonormal basis of $\ker\bH(\bx)$.
\end{definition}

\begin{theorem}\label{thm:wellposed}
If $\Cc$ is well-posed for~\eqref{sys:ctrl_affine}, then $\bH$ is a MOD-MCBF, the MOD-SDP~\eqref{eq:odsdp} is strictly feasible on an open neighborhood of $\Cc$ with a unique and continuous minimizer $(\bk(\bx),\bOmega(\bx))$, and $\bu=\bk(\bx)$ renders $\Cc$ forward invariant.
\end{theorem}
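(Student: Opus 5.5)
The plan has four parts, taken in order: strict feasibility on $\Cc$, strict feasibility on an open neighborhood, uniqueness and continuity of the minimizer, and forward invariance via \cref{thm:invariance}.

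\emph{Strict feasibility on $\Cc$.} We handle three cases.
\begin{itemize}
\item At interior points with $\bH(\bx)\succ\bzero$, the matrix $\bA=\balpha(\bH(\bx))$ is invertible. Taking any $\bu$ and $\bOmega=c\,\bA^{-1}$ gives the decay term $2c\bI$, so \eqref{eq:odcond_strict} holds for $c$ large.
\item At $\bx\in\Bc_H$ with a nonempty positive eigenspace, take the well-posedness control $\bu$, so that $\bZ_{00}\succ\bzero$. Reuse the construction in the proof of \cref{thm:equiv}: $\bOmega_{10}=-\bA^{-1}\bZ_{01}^\top$ and $\bOmega_{11}=c\bA^{-1}$. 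The constrained matrix then becomes $\mathrm{blkdiag}(\bZ_{00},\bZ_{11}+2c\bI)\succ\bzero$ for $c$ large.
\item If $\bH(\bx)=\bzero$, well-posedness directly gives $\dot\bH\succ\bzero$.
\end{itemize}
Together these show that $\bH$ is a MOD-MCBF.

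\emph{Open neighborhood.} Strict feasibility extends to a neighborhood because, for a fixed pair $(\bu,\bOmega)$, the map $\bx\mapsto\dot\bH(\bx,\bu)+2\sym(\balpha(\bH(\bx))\bOmega)$ is continuous. Its continuity follows from $\bH$ being $C^1$, $\bf,\bg$ being smooth, and $\balpha$ being continuous as a spectral function of a continuous $\alpha$. A strict LMI is therefore preserved in a ball around each point of $\Cc$. The union of these balls is the required open neighborhood $\mathcal{N}\supseteq\Cc$.

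\emph{Uniqueness and continuity.} The objective is strongly convex in $(\bu,\bOmega)$, with Hessian $2\,\mathrm{diag}(\bI,\rho_\Omega\bI)$, and the feasible set is closed and convex. Hence a unique minimizer exists wherever the SDP is feasible. For continuity I would use the standard parametric-optimization argument: the constraint map $\bx\mapsto\{(\bu,\bOmega):\text{LMI}\}$ is closed-graph, and it is lower semicontinuous at points where Slater's condition holds. Lower semicontinuity follows because any feasible point can be approximated by convex combinations with the strictly feasible point, which stay strictly feasible nearby. Berge's maximum theorem, together with uniqueness and a local boundedness argument, then gives continuity of the minimizer.

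For local boundedness, note that the optimal cost is bounded above near $\bx_0$ by the cost of the fixed strictly feasible point. This confines minimizers to a compact ball of radius uniform in $\bx$ near $\bx_0$. I expect this continuity step to be the main obstacle. Lower semicontinuity of the feasible-set map under Slater's condition must be argued carefully for LMIs, and the bound $\|\bOmega\|$ from the cost comparison is what supplies the compactness Berge's theorem needs. If a cleaner route is wanted, one could instead cite known continuity results for strongly convex parametric SDPs under Slater's condition (cf.~\cite{ong2026matrix,gessow2026high}).

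\emph{Forward invariance.} The pair $(\bk(\bx),\bOmega(\bx))$ is continuous on $\mathcal{N}$ and satisfies \eqref{eq:odcond_closedloop} there. \Cref{thm:invariance} therefore applies directly and yields forward invariance of $\Cc$. Continuity of $\bk$ also guarantees that closed-loop solutions exist, by Peano's theorem.
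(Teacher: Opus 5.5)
Your proposal is correct and follows essentially the same route as the paper. You use the same case split: the multiplier from the proof of \cref{thm:equiv} at singular boundary points, and a large decay multiplier where $\bH\succ\bzero$. From there you extend strict feasibility by continuity, obtain a unique continuous minimizer, and invoke \cref{thm:invariance}.

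The differences are minor. In the interior you take $\bOmega=c\bA^{-1}$ rather than $\tfrac{\omega}{2}\bI$. You treat the case $\bH(\bx)=\bzero$ explicitly. You also sketch the Slater/Berge continuity argument that the paper simply delegates to \cite[Thm.~4]{gessow2026high}.
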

\begin{proof}
At points in $\Bc_H$, the input from \cref{def:wellposed} with the multiplier constructed in the proof of \cref{thm:equiv} satisfies~\eqref{eq:odcond_strict} since now $\bZ_{00}\succ\bzero$. At points in $\Cc\setminus\Bc_H$, $\balpha(\bH)\succ\bzero$, so $\bOmega=\tfrac{\omega}{2}\bI$ suffices for $\omega$ large enough, and $\bH$ is a MOD-MCBF.
The remainder of the proof follows as in~\cite[Thm.~4]{gessow2026high}.
Strict feasibility extends to an open neighborhood of $\Cc$ by continuity, the MOD-SDP admits a unique and continuous minimizer there, and \cref{thm:invariance} applies.
\end{proof}

The input supplied by well-posedness has $\bZ_{00}\succ\bzero$, so \cref{prop:scalargap} guarantees that it also admits a scalar certificate.
Thus, both formulations are feasible and can enforce forward invariance under well-posedness.
Nevertheless, the scalar decay required to certify candidate controls can diverge as they approach the boundary, as characterized in \cref{prop:scalarcost}.

\section{Filter Invasiveness}\label{sec:invasiveness}
To compare filter invasiveness, fix $\bx$ and take $\omega_d=1$ so that $\bOmega_d=\tfrac12\bI$ reproduces the standard decay, and let
\begin{equation}\label{eq:stdfilter}
    \bu_{\rm std}=\argmin_{\bu\in\real^m}\;\|\bu-\bk_{\rm nom}\|^2
    \quad\textup{s.t.}\quad \dot\bH(\bx,\bu)+\balpha(\bH)\succeq\bzero,
\end{equation}
denote the standard MCBF filter, whose minimizer is unique when feasible since the cost is strictly convex over a closed convex set.
When the standard filter is feasible and $\omega_d=1$, the MOD-SDP deviates no further from the nominal control at the same state.

\begin{theorem}\label{thm:nonexpansive}
If $\omega_d=1$ and the standard filter~\eqref{eq:stdfilter} is feasible at $\bx$, then the MOD-SDP~\eqref{eq:odsdp} is feasible, attains its minimum at a unique $(\bu_{\rm MOD},\bOmega^\star)$, and
\begin{multline}\label{eq:nonexpansive}
    \|\bu_{\rm MOD}-\bk_{\rm nom}\|^2
    \leq \|\bu_{\rm MOD}-\bk_{\rm nom}\|^2+\rho_\Omega\|\bOmega^\star-\bOmega_d\|_F^2\\
    \leq \|\bu_{\rm std}-\bk_{\rm nom}\|^2 .
\end{multline}
\end{theorem}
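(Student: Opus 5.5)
The plan is to compare the two optimization problems at the fixed state $\bx$ by embedding the standard filter into the MOD-SDP, and then to handle existence and uniqueness of the MOD minimizer separately.

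\emph{Step 1: feasibility and the upper bound.} With $\omega_d=1$ we have $\bOmega_d=\tfrac12\bI$. For this choice, $\bOmega_d^\top\balpha(\bH)+\balpha(\bH)\bOmega_d=\balpha(\bH)$. Hence any $\bu$ that is feasible for~\eqref{eq:stdfilter} makes the pair $(\bu,\bOmega_d)$ feasible for~\eqref{eq:odsdp}. In particular, $(\bu_{\rm std},\bOmega_d)$ is feasible, and its MOD cost is $\|\bu_{\rm std}-\bk_{\rm nom}\|^2+0$. The optimal MOD value is therefore at most $\|\bu_{\rm std}-\bk_{\rm nom}\|^2$, which is the second inequality in~\eqref{eq:nonexpansive}. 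The first inequality is immediate, since $\rho_\Omega\|\bOmega^\star-\bOmega_d\|_F^2\geq0$.

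\emph{Step 2: existence and uniqueness of the minimizer.} This is where the real work lies. The feasible set $\mathcal F$ of~\eqref{eq:odsdp} is closed and convex in $(\bu,\bOmega)\in\real^m\times\real^{p\times p}$. This holds because $\mathcal F$ is the preimage of the closed PSD cone under a map affine in $(\bu,\bOmega)$, and $\mathcal F$ is nonempty by Step 1. The objective $J(\bu,\bOmega)=\|\bu-\bk_{\rm nom}\|^2+\rho_\Omega\|\bOmega-\bOmega_d\|_F^2$ has the following properties:
\begin{itemize}
\item it is a strictly convex quadratic in the joint variable, because $\rho_\Omega>0$;
\item its Hessian is a positive multiple of the identity, so it is strongly convex;
\item it is coercive, since its sublevel sets are bounded balls centered at $(\bk_{\rm nom},\bOmega_d)$.
\end{itemize}
Intersecting the sublevel set at the value of $(\bu_{\rm std},\bOmega_d)$ with $\mathcal F$ gives a nonempty compact set. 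By Weierstrass, $J$ attains its minimum on it, and that point is a global minimizer over $\mathcal F$. Uniqueness follows from strict convexity: if two distinct minimizers existed, their midpoint would be feasible by convexity and would have strictly smaller cost. Equivalently, the minimizer is the unique projection of $(\bk_{\rm nom},\bOmega_d)$ onto $\mathcal F$ in a weighted Euclidean norm.

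The only subtlety is that the skew and off-kernel parts of $\bOmega$ could make $\mathcal F$ unbounded. The penalty on the full $\bOmega$, not just on $\sym(\bOmega)$, is exactly what restores coercivity and uniqueness, so it should be stated explicitly. With the minimizer $(\bu_{\rm MOD},\bOmega^\star)$ in hand, the chain~\eqref{eq:nonexpansive} follows by comparing $J(\bu_{\rm MOD},\bOmega^\star)$ against the feasible point of Step 1.
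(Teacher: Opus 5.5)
Your proposal is correct and follows the paper's proof essentially verbatim. Feasibility of $(\bu_{\rm std},\bOmega_d)$ gives both inequalities, closedness of the LMI feasible set together with coercivity from $\rho_\Omega>0$ gives existence, and joint strict convexity gives uniqueness. One cosmetic fix: the Hessian is $\mathrm{diag}(2\bI,2\rho_\Omega\bI)$ rather than a multiple of the identity, so the sublevel sets are ellipsoids rather than balls, which does not affect the argument.
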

\begin{proof}
The pair $(\bu_{\rm std},\bOmega_d)$ is feasible for~\eqref{eq:odsdp} since at $\bOmega=\bOmega_d$ the constraint is that of~\eqref{eq:stdfilter}, and its penalty term vanishes, so it attains the objective value $\|\bu_{\rm std}-\bk_{\rm nom}\|^2$.
The MOD optimum is no larger, which gives the right inequality, and dropping the nonnegative penalty gives the left.
The minimum is attained since $\rho_\Omega>0$ makes the cost jointly coercive in $(\bu,\bOmega)$ and the feasible set is closed, and it is unique since the cost is jointly strictly convex.
\end{proof}

The argument only uses the fact that $\bOmega_d$ is admissible and recovers the unrelaxed constraint, so it holds for any penalty weight $\rho_\Omega>0$.
Applying the same argument with $\bOmega$ restricted to $\tfrac{\omega}{2}\bI$ and the scalar penalty $\rho_\omega(\omega-\omega_d)^2$, with $\rho_\omega=\tfrac{p}{4}\rho_\Omega$ so that $\rho_\Omega\|\tfrac{\omega}{2}\bI-\bOmega_d\|_F^2=\rho_\omega(\omega-\omega_d)^2$ and the two penalties agree on the restriction, yields the ordering of the optimal objective values $J^\star$ of the MOD~\eqref{eq:odsdp}, SOD~\eqref{eq:scalar_od_sdp}, and standard~\eqref{eq:stdfilter} filters
\begin{equation}\label{eq:hierarchy}
    J^\star_{\textup{MOD}}\;\leq\;J^\star_{\textup{SOD}}\;\leq\;J^\star_{\textup{std}}\triangleq\|\bu_{\rm std}-\bk_{\rm nom}\|^2 .
\end{equation}
Note that~\eqref{eq:hierarchy} orders the total costs, not the control deviations, so it does not imply a pointwise ordering of the deviations between the scalar and matrix filters.

\section{Recovery Constraint}\label{sec:recovery}
If the system is initialized outside the safe set or is pushed outside by disturbances, the filter should actively drive it back, and the optimal decay relaxation by itself provides no such guarantee.
The SOD framework addresses this by imposing $\omega\geq \omega_d$~\cite{ong2025properties}, which requires at least the nominal decay rate.
The matrix analogue of this requirement is the additional constraint
\begin{equation}\label{eq:recovery}
    \sym(\bOmega)\succeq\beta\bI
\end{equation}
for a chosen recovery margin $\beta>0$, where $\beta=\tfrac12$ corresponds to $\omega\geq1$.
Constraint~\eqref{eq:recovery} is itself an LMI in $\bOmega$, so adding it to~\eqref{eq:odsdp} preserves convexity.
If an eigenvalue $\lambda$ of $\bH$ is negative, then $\alpha(\lambda)<0$, and whenever the augmented filter is feasible the per-direction bound~\eqref{eq:perdirection} yields
\begin{equation}
    \dot\lambda \geq -2\alpha(\lambda)\bv^\top \sym(\bOmega)\bv
\geq -2\beta\alpha(\lambda) > 0,
\end{equation}
so the decay term drives negative eigenvalues toward zero.
The derivative inequality holds wherever the eigenvalue branch is differentiable. At a repeated negative eigenvalue $\lambda$ with orthonormal eigenspace basis $\bV_\lambda$, the same conclusion holds for every eigenvalue directional derivative since
\[
    \bV_\lambda^\top\dot\bH\bV_\lambda\succeq-2\alpha(\lambda)\,\bV_\lambda^\top\sym(\bOmega)\,\bV_\lambda
    \succeq-2\beta\alpha(\lambda)\,\bI\succ\bzero .
\]
\begin{proposition}\label{prop:recovery}
Let $\beta>0$. Then the following hold:
\begin{enumerate}
    \item[(a)] for each $\bx\in\Bc_H$ and $\bu\in\real^m$, there exists $\bOmega\in\real^{p\times p}$ satisfying~\eqref{eq:odcond} and~\eqref{eq:recovery} $\Leftrightarrow$ $\bV_0^\top\dot\bH(\bx,\bu)\bV_0\succeq\bzero$;
    \item[(b)] if additionally $\beta\leq\omega_d/2$, then the conclusions of \cref{thm:nonexpansive} also hold when~\eqref{eq:recovery} is included in~\eqref{eq:odsdp}.
\end{enumerate}
\end{proposition}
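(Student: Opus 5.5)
For part (a), the forward direction is immediate from \cref{thm:equiv}. Any $\bOmega$ satisfying both~\eqref{eq:odcond} and~\eqref{eq:recovery} in particular satisfies~\eqref{eq:odcond}. The ($\Rightarrow$) half of \cref{thm:equiv} then gives $\bV_0^\top\dot\bH\bV_0\succeq\bzero$, and the recovery constraint plays no role.

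For the reverse direction I will reuse the multiplier built in the ($\Leftarrow$) half of \cref{thm:equiv} and check that the free blocks can be chosen so that~\eqref{eq:recovery} also holds. If $\bH(\bx)=\bzero$, then~\eqref{eq:odcond} does not involve $\bOmega$, so $\bOmega=\beta\bI$ works. Otherwise I work in the eigenbasis $\bV$ of~\eqref{eq:blocksplit}; since $\sym(\bV\tilde\bOmega\bV^\top)=\bV\sym(\tilde\bOmega)\bV^\top$, it suffices to get $\sym(\tilde\bOmega)\succeq\beta\bI$.

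Take $\bOmega_{10}=-\bA^{-1}\bZ_{01}^\top$ and $\bOmega_{11}=c\bA^{-1}$ as before. The blocks $\bOmega_{00}$ and $\bOmega_{01}$ are free because they do not enter~\eqref{eq:decayblocks}. Set $\bOmega_{00}=\beta\bI$ and $\bOmega_{01}=-\bOmega_{10}^\top$. Then the off-diagonal blocks of $\sym(\tilde\bOmega)$ vanish, and
\[
\sym(\tilde\bOmega)=\begin{bmatrix}\beta\bI&\bzero\\\bzero&c\bA^{-1}\end{bmatrix}.
\]
This satisfies $\sym(\tilde\bOmega)\succeq\beta\bI$ once $c\geq\beta\lambda_{\max}(\bA)$. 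The constraint matrix is still block diagonal with blocks $\bZ_{00}$ and $\bZ_{11}+2c\bI$, so taking $c$ larger than both thresholds gives~\eqref{eq:odcond} as in \cref{thm:equiv}.

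The main point is exactly this observation: the skew freedom in the unconstrained $\bOmega_{01}$ cancels the asymmetric $\bOmega_{10}$, so the cancelling multiplier can be made to have a large symmetric part. If one preferred to leave $\bOmega_{01}=\bzero$, the fallback is a Schur complement argument. With $\bOmega_{00}=\gamma\bI$ and $\gamma$ large, the off-diagonal term $\tfrac12\bOmega_{10}^\top$ is dominated once $c$ is large as well. The skew-cancellation choice avoids this.

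For part (b), I will repeat the proof of \cref{thm:nonexpansive} with the augmented feasible set. The candidate $(\bu_{\rm std},\bOmega_d)$ remains feasible because $\sym(\bOmega_d)=\tfrac{\omega_d}{2}\bI\succeq\beta\bI$ exactly when $\beta\leq\omega_d/2$. Its penalty vanishes, so the inequality chain~\eqref{eq:nonexpansive} follows as before. Existence of a minimizer holds because~\eqref{eq:recovery} is a closed LMI, so the feasible set stays closed and convex and the cost stays coercive. Uniqueness holds because the cost remains jointly strictly convex.
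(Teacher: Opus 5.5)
Your proposal is correct and matches the paper's proof: you reuse the cancelling multiplier from \cref{thm:equiv}, set the free blocks to $\bOmega_{01}=-\bOmega_{10}^\top$ and $\bOmega_{00}$ a multiple of $\bI$ so that $\sym(\tilde\bOmega)$ is block diagonal, and for (b) you use that $(\bu_{\rm std},\bOmega_d)$ stays feasible exactly when $\beta\leq\omega_d/2$. Your separate treatment of the case $\bH(\bx)=\bzero$ and your recheck of closedness, coercivity, and strict convexity in (b) spell out details the paper leaves implicit.
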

\begin{proof}
(a) ($\Rightarrow$) Adding~\eqref{eq:recovery} only shrinks the admissible set of $\bOmega$, and soundness in \cref{thm:equiv} assumed nothing about $\bOmega$.
($\Leftarrow$) Take the choice $\bOmega_{10}=-\bA^{-1}\bZ_{01}^\top$ and $\bOmega_{11}=c\bA^{-1}$ from the proof of \cref{thm:equiv}, and use the free blocks to satisfy the bound by setting $\bOmega_{01}=-\bOmega_{10}^\top$ and $\bOmega_{00}=\mu\bI$.
The cross block of $\sym(\tilde\bOmega)$ then vanishes, leaving $\sym(\tilde\bOmega)=\mathrm{diag}(\mu\bI,\,c\bA^{-1})$.
Taking $\mu\geq\beta$ and $c\geq\max\{\beta\lambda_{\max}(\bA),\,c_0\}$, with $c_0$ large enough that $\bZ_{11}+2c_0\bI\succ\bzero$, and setting $\bOmega=\bV\tilde\bOmega\bV^\top$ satisfies~\eqref{eq:recovery} and the safety LMI simultaneously.
(b) The proof of \cref{thm:nonexpansive} used only feasibility of $(\bu_{\rm std},\bOmega_d)$, which is retained if and only if $\sym(\bOmega_d)=\tfrac{\omega_d}{2}\bI\succeq\beta\bI$, i.e., $\beta\leq\omega_d/2$. The nominal design $\omega_d=1$ and $\beta=\tfrac12$ satisfies this with equality.
\end{proof}

\section{Singularity Avoidance}\label{sec:example}
\subsection{System Setup}\label{sec:example_setup}
Consider a planar two-link arm with link lengths $l_1,l_2>0$ and state $\bx=(\theta_1,\theta_2)$, the absolute link angles measured counterclockwise from the horizontal as in \cref{fig:hero_closed_loop}, under velocity control $\dot\bx=\bu$.
The end effector is at $\by(\bx)=l_1[c_1,\,s_1]^\top+l_2[c_2,\,s_2]^\top$, where $s_i=\sin\theta_i$ and $c_i=\cos\theta_i$, and its Jacobian is
\begin{equation}\label{eq:arm_jacobian}
    \bJ(\bx)=\begin{bmatrix}-l_1s_1&-l_2s_2\\ l_1c_1&l_2c_2\end{bmatrix}=\begin{bmatrix}l_1\bn_1&l_2\bn_2\end{bmatrix},
\end{equation}
with $\bn_i=[-s_i,\,c_i]^\top$.
Since $\det\bJ=l_1l_2\sin(\theta_2-\theta_1)$, where $\theta_2-\theta_1$ is the angle at joint~2, the arm is singular when it is straight or folded.
To keep it away from these configurations, we impose a lower bound $0<\bar\sigma<\min(l_1,l_2)$ on the smallest singular value of $\bJ$ through the barrier
\begin{equation}\label{eq:singularitybarrier}
    \bH(\bx)\triangleq\bJ(\bx)\bJ(\bx)^\top-\bar\sigma^2\bI.
\end{equation}
Since the eigenvalues of $\bJ\bJ^\top$ are the squared singular values of $\bJ$~\cite[Sec.~3.9]{siciliano2009robotics}, $\lambda_{\min}(\bH(\bx))=\sigma_{\min}^2(\bJ(\bx))-\bar\sigma^2$, and hence $\bH(\bx)\succeq\bzero\Leftrightarrow\sigma_{\min}(\bJ(\bx))\geq\bar\sigma$.
We use the exponential decay $\alpha(\lambda)\triangleq\lambda$, so that $\balpha(\bH)=\bH$.

From~\eqref{eq:arm_jacobian}, $\bH$ has the constant trace $\tr\bH=l_1^2+l_2^2-2\bar\sigma^2>0$ and the determinant
\begin{equation}\label{eq:arm_det}
    \det\bH=(l_1^2-\bar\sigma^2)(l_2^2-\bar\sigma^2)-l_1^2l_2^2\cos^2(\theta_2-\theta_1),
\end{equation}
which depends on the configuration only through the angle $\theta_2-\theta_1$ at joint~2.
A symmetric $2\times2$ matrix is positive semidefinite if and only if its trace and determinant are nonnegative, so $\bx\in\Cc$ if and only if $\det\bH(\bx)\geq0$.
Substituting $\|\by\|^2=l_1^2+l_2^2+2l_1l_2\cos(\theta_2-\theta_1)$, a configuration is safe if and only if its end effector lies in the annulus
\begin{equation}\label{eq:arm_annulus}
\begin{aligned}
    &\ubar r\leq\|\by(\bx)\|\leq\bar r,\\
    &\bar r^2,\,\ubar r^2=l_1^2+l_2^2\pm2\sqrt{(l_1^2-\bar\sigma^2)(l_2^2-\bar\sigma^2)}.
\end{aligned}
\end{equation}
The end effector crosses the outer boundary $\bar r$ as the arm straightens and the inner boundary $\ubar r$ as it folds, into the unsafe configurations hatched in \cref{fig:hero_closed_loop}.
No safe motion can cross the straight or folded configurations, so $\theta_2-\theta_1$ is confined to one of the intervals between them; in all our runs, $0<\theta_2-\theta_1<\pi$.
Along trajectories, from~\eqref{eq:arm_jacobian} and~\eqref{eq:arm_det}
\begin{equation}\label{eq:arm_rates}
\begin{aligned}
    \tfrac{d}{dt}\det\bH&=l_1^2l_2^2\sin2(\theta_2-\theta_1)\,(\dot\theta_2-\dot\theta_1),\\
    \tfrac12\|\dot\bH\|_F^2&=l_1^4\dot\theta_1^2+l_2^4\dot\theta_2^2+2l_1^2l_2^2\dot\theta_1\dot\theta_2\cos2(\theta_2-\theta_1).
\end{aligned}
\end{equation}

At $\bx\in\Bc_H$, $\det\bH=0$ while $\tr\bH>0$, so the zero eigenvalue of $\bH$ is simple and $\lambda_{\max}(\bH)=\tr\bH$.
Differentiating $\det\bH=\lambda_{\min}(\bH)\,\lambda_{\max}(\bH)$ and applying~\eqref{eq:eigderiv} to the unit vector $\bv_0$ spanning $\ker\bH$ gives $\tfrac{d}{dt}\det\bH=\tr\bH\,\bv_0^\top\dot\bH\bv_0$.
The tangent cone condition~\eqref{eq:tangentcone} thus reduces to $\tfrac{d}{dt}\det\bH\geq0$, which is $(\dot\theta_2-\dot\theta_1)\sin2(\theta_2-\theta_1)\geq0$ by~\eqref{eq:arm_rates}.
It restricts only the joint 2 rate $\dot\theta_2-\dot\theta_1$: the arm may not straighten further at $\bar r$ ($\dot\theta_2\geq\dot\theta_1$) nor fold further at $\ubar r$ ($\dot\theta_2\leq\dot\theta_1$), while moving joint 1 with joint 2 fixed ($\dot\theta_1=\dot\theta_2$) is unrestricted.
Since $0<\bar\sigma<\min(l_1,l_2)$, setting $\det\bH=0$ in~\eqref{eq:arm_det} gives $\cos^2(\theta_2-\theta_1)\in(0,1)$, so $\sin2(\theta_2-\theta_1)\neq0$ and a suitable joint 2 rate makes $\tfrac{d}{dt}\det\bH>0$.
Such velocities are strictly inward: $\Cc$ is well-posed and \cref{thm:wellposed} applies.

Rotating about joint 1 ($\dot\theta_1=\dot\theta_2\neq0$) turns the unit vectors $\bn_1,\bn_2$ in~\eqref{eq:arm_jacobian} rigidly, so the eigenvalues of $\bH$ remain constant, yet $\dot\bH\neq\bzero$ wherever they are distinct.
On $\Bc_H$, this safe motion reproduces \cref{ex:scalar_counterexample}. Here, $\tilde\bH=\mathrm{diag}(0,\tr\bH)$; the diagonal entries of $\widetilde{\dot{\bH}}$ are zero, and the off-diagonal entries are $b$ with $|b|=|\dot\theta_1|\tr\bH\neq0$.

We choose a nominal controller that tracks an effector target $\by_d(t)$:
\begin{equation}\label{eq:arm_nominal}
    \bk_{\rm nom}(t,\bx)=\bJ(\bx)^{-1}\big(\dot\by_d(t)+\kappa\,(\by_d(t)-\by(\bx))\big),
\end{equation}
with gain $\kappa>0$, so that the unfiltered tracking error decays as $e^{-\kappa t}$.
The OD filters use $\omega_d=1$, so that $\bOmega_d=\tfrac12\bI$, the matched penalties $\rho_\omega=0.005$ and $\rho_\Omega=0.01$ of \cref{sec:invasiveness}, and the recovery constraints $\omega\geq\omega_d$ and~\eqref{eq:recovery} with $\beta=\tfrac12$, respectively.
Each command is held for $\Delta t=0.01$\,s; since $\dot\bx=\bu$, the next state is $\bx+\Delta t\,\bu$.
Each SDP is solved by Clarabel~0.11.1~\cite{goulart2024clarabel} through CVXPY~1.9.3~\cite{diamond2016cvxpy}, with gap and feasibility tolerances $10^{-9}$.

\subsection{Analysis of Expected Behavior}\label{sec:example_analysis}
We now examine how each filter handles the quantities of \cref{sec:example_setup}, in particular the joint 1 rotation, which preserves $\lambda_{\min}(\bH)$.

\emph{Matrix OD.}
The matrix multiplier
\begin{equation}\label{eq:rotationcertificate}
    \bOmega=\begin{bmatrix}1/2&-\dot\theta_1\\\dot\theta_1&1/2\end{bmatrix},
\end{equation}
whose skew part matches the rotation rate, cancels $\dot\bH$ exactly and reduces~\eqref{eq:odcond} to $\bH\succeq\bzero$; matrix OD therefore certifies the rotation at every $\bx\in\Cc$, on the boundary as in the interior.
Its symmetric part $\tfrac12\bI$ satisfies the recovery constraint~\eqref{eq:recovery} with $\beta=\tfrac12$, and $\|\bOmega-\tfrac12\bI\|_F=\sqrt2|\dot\theta_1|$ remains bounded at any rotation rate, independently of the distance to $\Bc_H$.

\emph{Scalar OD.}
Since $\tr\bH$ is constant, the trace of $\dot\bH+\omega\bH$ equals $\omega\tr\bH\geq0$ for every $\omega\geq0$, so the scalar OD condition $\dot\bH+\omega\bH\succeq\bzero$ reduces to $\det(\dot\bH+\omega\bH)\geq0$, with
\begin{equation}\label{eq:arm_scalar}
    \det(\dot\bH+\omega\bH)=\omega^2\det\bH+\omega\tfrac{d}{dt}\det\bH-\tfrac12\|\dot\bH\|_F^2.
\end{equation}
Setting $\det\bH=0$ in~\eqref{eq:arm_scalar}, per~\eqref{eq:scalarcost}, scalar OD certifies a strictly inward velocity at the boundary if and only if
\begin{equation}\label{eq:arm_omega_boundary}
    \omega\geq\frac{\|\dot\bH\|_F^2}{2\,\tfrac{d}{dt}\det\bH}.
\end{equation}
\cref{fig:hero_rates} illustrates this via its color code. The required decay~\eqref{eq:arm_omega_boundary} grows unboundedly as the joint 2 rate $\dot\theta_2-\dot\theta_1$ vanishes.
The joint 1 rotation reaches this limit: it gives $\tfrac{d}{dt}\det\bH=0$, so on $\Bc_H$ the determinant~\eqref{eq:arm_scalar} equals $-\tfrac12\|\dot\bH\|_F^2<0$ for every $\omega$.
Hence no scalar decay certifies the rotation at the boundary, as stated in \cref{ex:scalar_counterexample}.
At interior points, the rotation still gives $\tfrac{d}{dt}\det\bH=0$, and combining~\eqref{eq:arm_scalar} with~\eqref{eq:arm_rates} shows that the scalar condition holds if and only if
\begin{equation}\label{eq:rotation_scalar}
    \omega^2\det\bH\geq\dot\theta_1^2\big((l_1^2-l_2^2)^2+4l_1^2l_2^2\cos^2(\theta_2-\theta_1)\big).
\end{equation}
Scalar OD therefore certifies any rotation rate in the interior, but the decay it requires grows as $|\dot\theta_1|/\sqrt{\det\bH}$ and diverges as the state approaches $\Bc_H$.

\emph{Standard MCBF.}
The standard MCBF is scalar OD with $\omega=1$.
At the boundary, the rotation rate it certifies vanishes.
In the interior, setting $\omega=1$ in~\eqref{eq:rotation_scalar}, since the left side decreases and the right side increases with $\cos^2(\theta_2-\theta_1)$, the certified rotation rate is largest at $\theta_2-\theta_1=\pi/2$, where
\begin{equation}\label{eq:rotation_cap}
    |\dot\theta_1|\leq\bar\nu=\frac{\sqrt{(l_1^2-\bar\sigma^2)(l_2^2-\bar\sigma^2)}}{|l_1^2-l_2^2|}.
\end{equation}
Hence, for $l_1\neq l_2$, the standard MCBF cannot rotate about joint 1 faster than $\bar\nu$ anywhere in $\Cc$, even though the rotation preserves the safety margin.

\begin{figure}[t]
    \vspace*{4pt}
    \centering
    \includegraphics[width=\columnwidth]{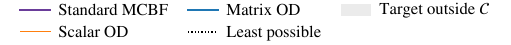}\\[1pt]
    \begin{subfigure}[t]{0.49\columnwidth}
        \centering
        \includegraphics[width=\linewidth]{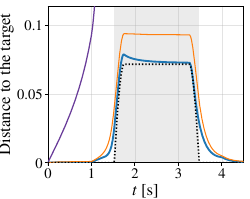}
        \caption{Target crosses~$\bar r$}
        \label{fig:tracking_outer}
    \end{subfigure}\hfill
    \begin{subfigure}[t]{0.49\columnwidth}
        \centering
        \includegraphics[width=\linewidth]{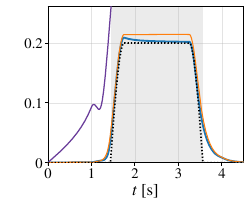}
        \caption{Target crosses~$\ubar r$}
        \label{fig:tracking_inner}
    \end{subfigure}
    \caption{Distance from the end effector to the target for $l_2=0.75$, as in Fig.~\ref{fig:hero}. The distance from the target to the safe set lower bounds the safe tracking error. Matrix OD has the best tracking performance.}
    \label{fig:tracking}
\end{figure}

\subsection{Numerical Experiments}\label{sec:example_numerics}
We fix $l_1=1$, $\bar\sigma=0.6$, and $\kappa=5$\,s$^{-1}$, and consider $l_2\in\{0.75,1,1.25\}$.
For each $l_2$, the target leaves the safe set through either the outer boundary $\bar r$, where the arm is nearly straight, or the inner boundary $\ubar r$, where it is nearly folded, giving six cases.
For $l_2=0.75$, \cref{fig:hero} shows the outer-boundary trajectory, and \cref{fig:tracking} shows the distance to the target for both boundary crossings.

All three filters keep $\lambda_{\min}(\bH)>0$ in every run.
As \cref{fig:tracking} shows, the standard MCBF stays farthest from the target, followed by scalar OD, while matrix OD tracks it most closely.
The standoff of scalar OD follows from \cref{prop:scalarcost}: the smallest scalar decay that certifies the inputs applied by matrix OD has a median of $24$ to $36$, i.e., a penalty distance $\|\tfrac{\omega}{2}\bI-\bOmega_d\|_F$ of $17$ to $25$, so scalar OD settles for a mean $\omega$ of only $6.3$ to $8.3$ and sacrifices tracking instead.
Clarabel averages $0.082$, $0.098$, and $0.137$\,ms per solve for standard MCBF, SOD, and MOD, respectively.

\begin{figure}[t]
    \centering
    \includegraphics[width=\columnwidth]{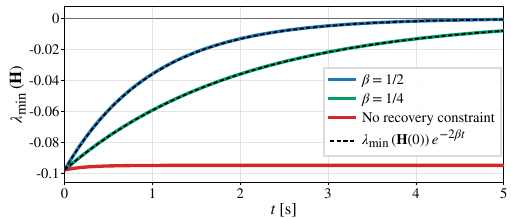}
    \caption{Recovery of matrix OD for $l_2=0.75$ from an unsafe, nearly straight configuration. The arm starts beyond the outer boundary $\bar r$ with the target fixed at its end effector, so the nominal controller~\eqref{eq:arm_nominal} keeps the arm unsafe. With the recovery constraint~\eqref{eq:recovery}, the margin rises at exactly the guaranteed rate $2\beta$ (dashed: $\lambda_{\min}(\bH(0))e^{-2\beta t}$). Without it, the arm stays unsafe.}
    \label{fig:recovery}
\end{figure}

\emph{Recovery.}
To test the recovery constraint~\eqref{eq:recovery}, we initialize the arm with $l_2=0.75$ beyond the outer boundary $\bar r$, at $\lambda_{\min}(\bH)=-0.098$. The target $\by_d$ is fixed at the initial end effector position, so the nominal controller holds the arm in the unsafe region.
\cref{fig:recovery} compares matrix OD with $\beta=\tfrac12$, $\beta=\tfrac14$, and without~\eqref{eq:recovery}.
With the constraint, $\lambda_{\min}(\bH)$ rises at exactly the guaranteed recovery rate $2\beta$ of \cref{sec:recovery}; without the constraint, the arm remains in the unsafe region.

\section{Conclusion}
This paper develops matrix optimal decay, a matrix-multiplier-based optimal decay formulation for matrix control barrier functions.
We showed that scalar decay is sound but not complete at the boundary, and that the proposed matrix condition certifies every control satisfying the matrix tangent cone condition while preserving convexity.
We further established continuity and forward invariance under well-posedness, a bound on deviation relative to the standard filter, and a convex recovery constraint.
Future work will explore the use of matrix decay in other problem domains, including its impact on CBF robustification and potential applications to robotic systems.

\bibliographystyle{IEEEtran}

\bibliography{references}
\end{document}